\title{Nontrivial effective lower bounds for the least common multiple of some quadratic sequences}
\author{\textsc{Sid Ali BOUSLA} and \textsc{Bakir FARHI} \\
Laboratoire de Mathématiques appliquées \\
Faculté des Sciences Exactes \\
Université de Bejaia, 06000 Bejaia, Algeria \\[1mm]
\href{mailto:bouslasidali@gmail.com}{bouslasidali@gmail.com} (S.A. Bousla),  \href{mailto:bakir.farhi@gmail.com}{bakir.farhi@gmail.com} (B. Farhi)
}
\date{}
\def\N{{\mathbb N}}
\def\Z{{\mathbb Z}}
\def\lcm{\mathrm{lcm}}
\def\EMdash{\leavevmode\hbox to 10.6mm{\vrule height .63ex depth -.59ex
    width 10mm\hfill}}
\theoremstyle{plain}
\numberwithin{equation}{section}
\newtheorem{thm}{Theorem}[section]
\newtheorem{theorem}[thm]{Theorem}
\newtheorem{lemma}[thm]{Lemma}
\newtheorem{prop}[thm]{Proposition}
\newtheorem{coll}[thm]{Corollary}
\begin{document}

\maketitle
\begin{abstract}
This paper is devoted to studying the numbers $L_{c,m,n} := \mathrm{lcm}\{m^2+c,(m+1)^2+c,$ \linebreak $\dots,n^2+c\}$, where $c,m,n$ are positive integers such that $m \leq n$. Precisely, we prove that $L_{c,m,n}$ is a multiple of the rational number
\[\frac{\displaystyle\prod_{k=m}^{n}\left(k^2+c\right)}{c \cdot (n-m)!\displaystyle\prod_{k=1}^{n-m}\left(k^2+4c\right)} ,\]
and we derive (as consequences) some nontrivial lower bounds for $L_{c,m,n}$. We prove for example that if $n- \frac{1}{2} n^{2/3} \leq m \leq n$, then we have $L_{c,m,n} \geq \lambda(c) \cdot n e^{3 (n - m)}$, where $\lambda(c) := \frac{e^{- \frac{2 \pi^2}{3} c - \frac{5}{12}}}{(2 \pi)^{3/2} c}$. Further, it must be noted that our approach (focusing on commutative algebra) is new and different from those using previously by Farhi, Oon and Hong. 
\end{abstract}
\noindent\textbf{MSC 2010:} Primary 11A05, 11B83; Secondary 13G05, 30B40, 33B15. \\
\textbf{Keywords:} Least common multiple, quadratic sequences.

\section{Introduction and Notation}
Throughout this paper, we let $\mathbb{N^*}$ denote the set $\mathbb{N}\setminus\lbrace 0\rbrace$ of positive integers. For $t\in\mathbb{R}$, we let $\lfloor t\rfloor$ and $\lceil t\rceil$ respectively denote the floor and the ceiling function. We say that an integer $a$ is a multiple of a non-zero rational number $r$ (or equivalently, $r$ is a divisor of $a$) if the ratio $a / r$ is an integer. If $m,n,c$ are positive integers such that $m\leq n$, we set $L_{c,m,n}:=\lcm\left\lbrace m^2+c,(m+1)^2+c,\dots,n^2+c \right\rbrace$. For a given polynomial $P\in\mathbb{C}[X]$, we denote by $\overline{P}$ its polynomial conjugate in $\mathbb{C}[X]$, that is the polynomial we get by replacing each coefficient of $P$ with its complex conjugate. It is well-known that the conjugation of polynomials in $\mathbb{C}[X]$ is compatible with addition and multiplication, in the sens that for every $P,Q\in\mathbb{C}[X]$, we have $\overline{P+Q}=\overline{P}+\overline{Q}$ and $\overline{P\cdot Q}=\overline{P}\cdot\overline{Q}$. Further, we let $I$, $E_h$ $(h\in\mathbb{R})$ and $\Delta$ denote the linear operators on $\mathbb{C}[X]$ which respectively represent the identity, the shift operator with step $h$ ($E_hP\left(X\right)=P\left(X+h\right)$, $\forall P\in\mathbb{C}[X]$) and the forward difference ($\Delta P\left(X\right)=P\left(X+1\right)-P\left(X\right)$, $\forall P\in\mathbb{C}[X]$). For $n\in\mathbb{N}$, the expression of $\Delta^n$ in terms of the $E_h$'s is easily obtained from the binomial formula, as follows:
\begin{equation}\label{dece}
\Delta^n=\left(E_1-I\right)^n=\sum_{m=0}^{n}(-1)^{n-m}\binom{n}{m}E_{1}^m=\sum_{m=0}^{n}(-1)^{n-m}\binom{n}{m}E_{m}.
\end{equation} 
For falling factorial powers, we use Knuth's notation:
\[X^{\underline{n}}:=X\left(X-1\right)\left(X-2\right)\cdots\left(X-n+1\right)~~~~(\forall n\in\mathbb{N}).\] 
The study of the least common multiple of the first $n$ consecutive positive integers $(n\in\mathbb{N^*})$ began with Chebyshev's work \cite{cheb} in his attempts to prove the prime number theorem. The latter showed that the prime number theorem is equivalent to stating that $\log \lcm\left(1,2,\dots,n\right)\sim_{+\infty} n$. More recently, many authors are interested in the effective estimates of the least common multiple of consecutive terms of some integer sequences. In 1972, Hanson \cite{han} showed (by leaning on the development of the number 1 in Sylvester series) that $\lcm\left(1,2,\dots,n\right)\leq 3^n$ $(\forall n\in\mathbb{N^*})$. In 1982, investigating the integral $\int_{0}^{1}x^n(1-x)^n\mathrm{d}x$, Nair \cite{nair} gave a simple proof that $\lcm\left(1,2,\dots,n\right)\geq 2^n$ $(\forall n\geq 7)$. In the continuation, the second author \cite{far} obtained nontrivial lower bounds for the least common multiple of consecutive terms in an arithmetic progression. In particular, he proved that for any $u_0,r,n\in \mathbb{N^*}$ such that $\gcd\left(u_0,r\right)=1$, we have:
\begin{equation}\label{01}
\lcm\left(u_0,u_0+r,\dots,u_0+nr\right)\geq u_0\left(r+1\right)^{n-1},
\end{equation}
and conjectured that the exponent $(n-1)$ appearing in the right-hand side of \eqref{01} can be replaced by $n$, which is the optimal exponent that can be obtained. That conjecture was confirmed by Hong and Feng \cite{hong1}. Furthermore, several authors obtained improvements of \eqref{01} for $n$ sufficiently large in terms of $u_{0}$ and $r$ (see e.g., \cite{hong1},\cite{hongkom} and \cite{kane}). The second author \cite{far} also obtained nontrivial lower bounds for some quadratic sequences. In particular, he proved that for any positive integer $n$, we have:
\begin{equation}\label{02+}
\lcm\left(1^2+1,2^2+1,\dots,n^2+1\right)\geq 0.32(1.442)^n.
\end{equation}
In 2013, Oon \cite{oon} managed to improve \eqref{02+} by proving that for any positive integers $c$ and $n$, we have:     
\begin{equation}\label{02}
\lcm\left(1^2+c,2^2+c,\dots,n^2+c\right)\geq 2^n.
\end{equation}
Actually, we have the following result a little stronger:
\begin{theorem}[Oon \cite{oon}]\label{03}
Let $c,n,m$ be positive integers such that $m\leq \left\lceil \frac{n}{2}\right\rceil$. Then, we have:
\[L_{c,m,n}\geq 2^n.\] 
\end{theorem}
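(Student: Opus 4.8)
The plan is to exploit the factorisation $k^{2}+c=(k-i\sqrt{c})(k+i\sqrt{c})$ (with $i=\sqrt{-1}$) together with the finite-difference calculus recalled in \eqref{dece}. First I would reduce to the extremal case $m=\lceil n/2\rceil$: enlarging $m$ only deletes terms from the generating set, so $L_{c,m,n}$ is non-increasing in $m$ and $L_{c,m,n}\ge L_{c,\lceil n/2\rceil,n}$ for every $m\le\lceil n/2\rceil$; it therefore suffices to treat $m=\lceil n/2\rceil$. Writing $N:=n-m$ and $\Pi:=\prod_{k=m}^{n}(k^{2}+c)$, the engine of the proof is the elementary identity $\Delta^{N}\bigl(\tfrac{1}{X-a}\bigr)\big|_{X=m}=\frac{(-1)^{N}N!}{\prod_{k=m}^{n}(k-a)}$, applied to $a=i\sqrt{c}$ and $a=-i\sqrt{c}$ and combined through $\frac{1}{X^{2}+c}=\frac{1}{2i\sqrt{c}}\bigl(\frac{1}{X-i\sqrt{c}}-\frac{1}{X+i\sqrt{c}}\bigr)$. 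Expanding the left-hand side by \eqref{dece} yields
\[
\sum_{k=m}^{n}(-1)^{n-k}\binom{N}{k-m}\frac{1}{k^{2}+c}
=\frac{(-1)^{N+1}N!\,\mathrm{Im}(A)}{\sqrt{c}\,\Pi},
\qquad A:=\prod_{k=m}^{n}(k-i\sqrt{c}).
\]

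Next I would extract a divisibility statement. Since conjugation is multiplicative, $A$ lies in the ring $\Z[i\sqrt{c}]$, say $A=P-S\,i\sqrt{c}$ with $P,S\in\Z$; hence $\mathrm{Im}(A)=-S\sqrt{c}$ and $P^{2}+cS^{2}=|A|^{2}=\overline{A}A=\Pi$. Multiplying the displayed identity by the integer $L:=L_{c,m,n}$ turns its left-hand side into an integer (each $L/(k^{2}+c)\in\Z$), so the right-hand side $(-1)^{N}N!\,S\,L/\Pi$ is an integer too; this is the key relation $\Pi\mid N!\,S\,L$. Provided $S\ne0$ it gives $L_{c,m,n}\ge\Pi/(N!\,|S|)$, and since $cS^{2}\le\Pi$ forces $|S|\le\sqrt{\Pi/c}$, together with $\Pi\ge\prod_{k=m}^{n}k^{2}$ I obtain
\[
L_{c,m,n}\ge\frac{\sqrt{c\,\Pi}}{N!}\ge\frac{\sqrt{c}\,\prod_{k=m}^{n}k}{(n-m)!}
=\sqrt{c}\,(n-m+1)\binom{n}{m-1}.
\]

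It then remains to check $\sqrt{c}\,(n-m+1)\binom{n}{m-1}\ge2^{n}$ for $m=\lceil n/2\rceil$. Here $m-1$ sits at (essentially) the central column, so $\binom{n}{m-1}$ agrees with the central binomial coefficient up to a bounded factor and obeys $\binom{n}{m-1}\gg 2^{n}/\sqrt{n}$ by Stirling, while $(n-m+1)\asymp n/2$; the product therefore exceeds $2^{n}$ with a surplus of order $\sqrt{n}$ once $n$ is large, and the finitely many small values of $n$ are verified directly (using, where needed, the exact value of $S$ in place of the crude bound). As $c\ge1$, the factor $\sqrt{c}$ only helps.

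The main obstacle is securing the hypothesis $S\ne0$, equivalently $\mathrm{Im}(A)\ne0$, i.e. the non-vanishing of the alternating sum on the left of the first display. This is genuinely delicate: over the full range it can fail — for instance $m=1$, $n=3$, $c=11$ gives $\mathrm{Im}(A)=\sqrt{c}\,(c-11)=0$ — because $\arg A=-\sum_{k=m}^{n}\arctan(\sqrt{c}/k)$ may be an integer multiple of $\pi$. The reduction to $m=\lceil n/2\rceil$ shrinks the list of dangerous parameters, and I expect the cleanest remedy is to evaluate $N!\,S$ in closed form and exhibit it as a nonzero integer of controlled size; this closed form is precisely where the structural quantity $\prod_{k=1}^{n-m}(k^{2}+4c)$ governing $L_{c,m,n}$ should surface. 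A second, milder difficulty is that the final binomial estimate is essentially tight, so one must confirm that the crude bound $|S|\le\sqrt{\Pi/c}$ wastes no more than the $\Theta(\sqrt{n})$ surplus supplied by the central binomial coefficient.
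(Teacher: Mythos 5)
Your reduction to $m=\lceil n/2\rceil$, your finite-difference identity, and your final combinatorial step are all sound --- note that $(n-m+1)\binom{n}{m-1}=m\binom{n}{m}$, which is exactly the quantity the paper compares with $2^n$ via $\lceil r/2\rceil\binom{r}{\lceil r/2\rceil}\geq 2^{r}$ for $r\geq 7$ plus a hand check for $n\leq 6$; your identity for $\Delta^{N}\bigl(\tfrac{1}{X-a}\bigr)$ is the finite-difference avatar of the interpolation argument in the paper's Lemma \ref{l1}. But the proof has a genuine gap that you yourself flag and do not close: everything rests on $S\neq 0$, i.e.\ on the non-vanishing of $\mathrm{Im}\prod_{k=m}^{n}(k-i\sqrt{c})$, and when $S=0$ the relation $\Pi\mid N!\,S\,L$ is vacuously true and yields no lower bound. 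Your own example ($m=1$, $n=3$, $c=11$) shows that this vanishing really occurs inside the hypothesis range $m\leq\lceil n/2\rceil$, and nothing in the argument rules it out for $m=\lceil n/2\rceil$ and large $n$: the condition is $\sum_{k=m}^{n}\arctan(\sqrt{c}/k)\in\pi\mathbb{Z}$, and with $n-m+1$ factors this sum sweeps out the interval $(0,(n-m+1)\pi/2)$ as $c$ grows, so whether one of the critical values of $c$ is an integer is a genuine Diophantine question you would have to settle. The proposed remedy (a closed form for $N!\,S$) is not carried out, so as written the proof is incomplete.

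The repair is to not take imaginary parts at all. Multiplying your difference identity at $a=-\sqrt{-c}$ by $L$ gives $\sum_{j=0}^{N}(-1)^{N-j}\binom{N}{j}\frac{L}{m+j+\sqrt{-c}}=\frac{(-1)^{N}N!\,L}{\prod_{k=m}^{n}(k+\sqrt{-c})}$, and the left-hand side lies in $\mathbb{Z}[\sqrt{-c}]$ because each $\frac{L}{k+\sqrt{-c}}=\frac{L}{k^2+c}(k-\sqrt{-c})$ does; hence $N!\,L$ is a multiple, in $\mathbb{Z}[\sqrt{-c}]$, of $\prod_{k=m}^{n}(k+\sqrt{-c})$. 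The quotient is a \emph{nonzero} element of $\mathbb{Z}[\sqrt{-c}]$, so its modulus is automatically at least $1$, and taking moduli yields $L\geq\prod_{k=m}^{n}\sqrt{k^2+c}\,/\,(n-m)!\geq m\binom{n}{m}$ with no hypothesis on $S$. This is precisely the paper's relation \eqref{**}, obtained there by the algebraic Lemma \ref{l1} rather than by finite differences; the whole point of that route is that the norm inequality $x^{2}+cy^{2}\geq 1$ replaces your unproven $S\neq 0$.
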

In the next, Hong et al. \cite{hong2} successed to generalize Theorem \ref{03} for polynomial sequences $\left(f(n)\right)_{n\geq 1}$, with $f\in\mathbb{Z}[X]$ and the coefficients of $f$ are all nonnegative. In another direction, various asymptotic estimates have been obtained by several authors. For example, Bateman \cite{bat} proved that for any $h,k\in\mathbb{Z}$ with $k>0$, $h+k>0$ and $\gcd(h,k)=1$, we have: 
\begin{equation}\label{04}
\log\lcm\lbrace h+k,h+2k,\dots,h+nk\rbrace ~\sim_{+ \infty}~ \left(\frac{k}{\varphi(k)}  \sum_{\begin{subarray}{c} 1\leq m\leq k \\ \gcd(m , k)=1  \end{subarray}}\frac{1}{m}\right) n ,
\end{equation} 
where $\varphi$ denotes the Euler totient function. Another asymptotic estimate a little harder to prove is due to Cilleruelo \cite{cil} and states that for every irreducible quadratic polynomial $f\in\mathbb{Z}[X]$, we have: 
\begin{equation}\label{05}
\log \lcm\lbrace f(1),\dots,f(n)\rbrace = n\log n +Bn+o(n),
\end{equation}
where $B$ is a constant depending on $f$.\\
In this paper, we use arguments of commutative algebra and complex analysis to find a nontrivial rational divisor of $L_{c,m,n}$ ($c , m , n \in \N^*$). As a consequence, we derive some new nontrivial lower bounds for $L_{c,m,n}$. The rest of the paper is organized in four parts (subsections). In the first part, we give an algebraic lemma which allows us, on the one hand to re-demonstrate \linebreak Theorem \ref{03} of Oon by an easy and purely algebraic method, and on the other hand to reformulate the problem of bounding from below the number $L_{c,m,n}$. In that reformulation, we are leaded to introduce a vital arithmetic function, noted $h_c$, whose multiple provides a divisor for $L_{c,m,n}$. In the next two parts, we study the arithmetic function $h_c$ and we find for it a simple multiple. In the last part, we use the obtained multiple of $h_c$ to deduce a nontrivial divisor for $L_{c,m,n}$. Our new nontrivial lower bounds for $L_{c,m,n}$ then follow from that divisor.
\section{The results and the proofs}
\subsection{An algebraic method}
Although the method used by Oon \cite{oon} to obtain his result (i.e., Theorem \ref{03}) is analytic, the ingredients for its success are algebraic in depth, as we will show it below by applying the following fundamental algebraic lemma: 
\begin{lemma}\label{l1}
Let $\mathcal{A}$ be an integral domain and $n$ be a positive integer. Let also $u_0,u_1,\dots,u_n$, $a,b$ be elements of $\mathcal{A}$. Suppose that $a$ and $b$ satisfy the following conditions:
\begin{enumerate}
\item Each of the elements $u_0,u_1,\dots,u_n$ of $\mathcal{A}$ divides $a$. 
\item Each of the elements $\prod_{\begin{subarray}{c}0\leq j\leq n \\ j\neq i\end{subarray}}\left(u_i-u_j\right)$ $(i=0,1,\dots,n)$ of $\mathcal{A}$ divides $b$. 
\end{enumerate}
Then the product $ab$ is a multiple of the product $u_0u_1\cdots u_n$.
\end{lemma}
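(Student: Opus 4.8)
The plan is to prove this by exhibiting an explicit $\mathcal{A}$-linear combination of the $u_i$ that produces $ab$, using the classical Lagrange interpolation identity. The key observation is that the quantities $\prod_{j\neq i}(u_i-u_j)$ appearing in condition~2 are exactly the denominators that occur in Lagrange interpolation at the nodes $u_0,\dots,u_n$. So I would first recall, or rather derive inside the integral domain $\mathcal{A}$, the partial-fraction--type identity
\[
\sum_{i=0}^{n}\frac{1}{u_i\prod_{\begin{subarray}{c}0\leq j\leq n\\ j\neq i\end{subarray}}(u_i-u_j)}
=\frac{(-1)^n}{u_0u_1\cdots u_n}.
\]
This is the standard evaluation of the Lagrange-interpolation coefficients of the constant polynomial $1$ (equivalently, interpolating the function $1/X$ at the nodes), and it holds formally as an identity of rational functions, hence specializes to any choice of the $u_i$ in the fraction field of $\mathcal{A}$, provided the $u_i$ are pairwise distinct (the generic case; the general case then follows by a density or specialization argument, or by clearing denominators first).

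The crucial step is to clear denominators in this identity to obtain a relation living in $\mathcal{A}$ itself. Multiplying through by $u_0u_1\cdots u_n$ and by the product $\prod_{i}\prod_{j\neq i}(u_i-u_j)$ is wasteful; instead I would multiply the identity directly by $(-1)^n\,u_0u_1\cdots u_n\cdot ab$ and interpret each resulting summand. Concretely, for each fixed $i$ the factor $u_i$ cancels against one factor of the product $u_0\cdots u_n$, leaving $\prod_{k\neq i}u_k$; and the denominator $\prod_{j\neq i}(u_i-u_j)$ divides $b$ by hypothesis~2. This rewrites $ab$ as
\[
ab=(-1)^n\sum_{i=0}^{n}\left(\frac{a}{u_i}\right)\Bigl(\prod_{\begin{subarray}{c}0\leq k\leq n\\ k\neq i\end{subarray}}u_k\Bigr)\Bigl(\frac{b}{\prod_{\begin{subarray}{c}0\leq j\leq n\\ j\neq i\end{subarray}}(u_i-u_j)}\Bigr),
\]
where hypothesis~1 guarantees $a/u_i\in\mathcal{A}$ and hypothesis~2 guarantees the last bracketed factor lies in $\mathcal{A}$. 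Every term on the right is then visibly a multiple of $\prod_{k\neq i}u_k$, and in particular each term is an $\mathcal{A}$-multiple of the full product $u_0u_1\cdots u_n$ once one checks the remaining divisibility, which is the heart of the argument.

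The main obstacle I anticipate is handling the case where some of the $u_i$ coincide or where some $u_i$ is a zero divisor-free but noninvertible element, since the rational-function identity above is derived in the fraction field and uses that the nodes are distinct. To make the argument rigorous inside the integral domain $\mathcal{A}$, I would first establish the \emph{polynomial} identity in indeterminates $X_0,\dots,X_n$ over $\Z$,
\[
X_0X_1\cdots X_n\cdot\sum_{i=0}^{n}\;\prod_{\begin{subarray}{c}0\leq j\leq n\\ j\neq i\end{subarray}}\frac{X_j}{X_i-X_j}
=(-1)^n\,X_0X_1\cdots X_n,
\]
clear all denominators symbolically to obtain a genuine polynomial identity with integer coefficients, and only then specialize $X_i\mapsto u_i$. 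Because the identity holds in $\Z[X_0,\dots,X_n]$ before specialization, its image holds in any commutative ring, and the integral-domain hypothesis is needed only to justify cancelling the common factors $(u_i-u_j)$ when passing back to the displayed divisibility statement. This symbolic-then-specialize route cleanly sidesteps the distinctness issue and delivers the conclusion that $u_0u_1\cdots u_n\mid ab$ in $\mathcal{A}$.
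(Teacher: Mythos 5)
Your argument is essentially the paper's own proof in a different dress: the Lagrange/partial-fraction identity $\sum_{i=0}^{n}\prod_{j\neq i}\frac{u_j}{u_i-u_j}=(-1)^n$ that you invoke is exactly what the paper derives by noting that the degree-$\leq n$ polynomial $\sum_{i}\ell_i\prod_{j\neq i}(X-u_j)-b$ (with $\ell_i:=b/\prod_{j\neq i}(u_i-u_j)\in\mathcal{A}$) vanishes at the $n+1$ points $u_0,\dots,u_n$ and is therefore zero; evaluating at $X=0$ and multiplying by $a$ gives precisely your decomposition of $ab$. Two small repairs are needed. First, your displayed formula for $ab$ is off by a factor of $u_i$ in each summand: once you extract $a/u_i$, the accompanying product must be the \emph{full} product $u_0u_1\cdots u_n$, not $\prod_{k\neq i}u_k$ (equivalently, keep $a$ intact, get the term $a\cdot\prod_{k\neq i}u_k\cdot\bigl(b/\prod_{j\neq i}(u_i-u_j)\bigr)$, and only then rewrite $a\prod_{k\neq i}u_k=(a/u_i)\prod_{k}u_k$); as written your right-hand side does not equal $ab$, and your closing remark about a ``remaining divisibility'' to be checked is a symptom of this slip, since after the correction each term is \emph{visibly} a multiple of $u_0\cdots u_n$ with nothing left to verify. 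Second, the symbolic-then-specialize route does not actually dispose of coincident $u_i$'s: after clearing denominators you must still cancel the factors $u_i-u_j$ to recover the divisibility statement, which is impossible when some of them vanish. That case is trivial for a different reason, which the paper records at the outset: if $u_i=u_j$ for some $i\neq j$, condition 2 makes $b$ a multiple of $0$, hence $b=0$ and $ab=0$. With these two fixes your proof is complete; the paper's version of the same computation has the mild advantage of never leaving $\mathcal{A}$, since it manipulates the cofactors $k_i,\ell_i$ rather than fractions in the quotient field.
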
    
\begin{proof}
If the elements $u_0,u_1,\dots,u_n$ of $\mathcal{A}$ are not pairwise distinct, the result of the lemma is trivial, since by its second condition we have $b=0_{\mathcal{A}}$. Suppose for the sequel that the $u_i$'s $(i=0,1,\dots,n)$ are pairwise distinct. We use the well-known result that if a polynomial in one indeterminate, with coefficients in an integral domain, has a number of roots (in that domain) greater than its degree then it is zero. Since $a$ is a multiple of each of the elements $u_0,u_1,\dots,u_n$ of $\mathcal{A}$, then there exist $k_0,k_1,\dots,k_n\in \mathcal{A}$ such that:
\begin{equation}\label{eq1}
a=k_0u_0=k_1u_1=\dots=k_nu_n.
\end{equation}   
Similarly, since $b$ is a multiple of each of the elements $\prod_{\begin{subarray}{c}0\leq j\leq n \\ j\neq i\end{subarray}}\left(u_i-u_j\right) (i=0,1,\dots,n)$, then there exist $\ell_0,\ell_1,\dots,\ell_n\in \mathcal{A}$ such that:
\begin{equation}\label{eq2}
b=\ell_i\prod_{\begin{subarray}{c}0\leq j\leq n \\ j\neq i\end{subarray}}\left(u_i-u_j\right)~~\left(\forall i\in\left\lbrace0,1,\dots,n\right\rbrace\right).
\end{equation} 
Now, consider the following polynomial of $\mathcal{A}[X]$:
\[P\left(X\right):=\sum_{i=0}^{n}\left[\ell_i\prod_{\begin{subarray}{c}0\leq j\leq n \\ j\neq i\end{subarray}}\left(X-u_j\right)\right]-b.\]
Obviously, we have $\deg P\leq n$. On the other hand, we have (according to \eqref{eq2}):
\[P\left(u_i\right)=0~~\left(\forall i\in\left\lbrace0,1,\dots,n\right\rbrace\right),\]
showing that the number of roots of $P$ in $\mathcal{A}$ is greater than its degree. So, according to the elementary result of commutative algebra announced above, the polynomial $P$ is zero. In particular, we have $P\left(0\right)=0$; that is:
\[b=(-1)^n\sum_{i=0}^{n}\ell_i\left(\prod_{\begin{subarray}{c}0\leq j\leq n \\ j\neq i\end{subarray}}u_j\right).\]
By multiplying the two sides of this last equality by $a$, we get (according to \eqref{eq1}):
\begin{align*}
ab&=(-1)^n\sum_{i=0}^{n}\ell_ia\left(\prod_{\begin{subarray}{c}0\leq j\leq n \\ j\neq i\end{subarray}}u_j\right)\\&=(-1)^n\sum_{i=0}^{n}\ell_ik_iu_i\left(\prod_{\begin{subarray}{c}0\leq j\leq n \\ j\neq i\end{subarray}}u_j\right)\\&=(-1)^n\left(\sum_{i=0}^{n}\ell_ik_i\right)u_0u_1\cdots u_n,
\end{align*}   
showing that $ab$ is a multiple of $u_0u_1\cdots u_n$, as required. This completes the proof. 
\end{proof}
Now, we use Lemma \ref{l1} to establish a new proof of Theorem \ref{03}, which is purely algebraic. 
\begin{proof}[A new proof of Theorem \ref{03}]
Since $L_{c,m,n}$ is obviously non-increasing relative to $m$, then it suffices to prove the result of the theorem for $m=\left\lceil\frac{n}{2}\right\rceil$, that is $L_{c,\left\lceil\frac{n}{2}\right\rceil,n}\geq 2^n$. For simplicity, put $m_0=\left\lceil\frac{n}{2}\right\rceil$. So, we have to show that $L_{c,m_0,n}\geq 2^n$. For $n\in\left\lbrace 1,2,\dots,6\right\rbrace$, this can be easily checked by hand (as is done by Oon). Suppose for the sequel that $n\geq 7$. It is well-known and easily proved that for any integer $r\geq 7$, we have $\left\lceil \frac{r}{2}\right\rceil\binom{r}{\left\lceil \frac{r}{2}\right\rceil}\geq 2^{r}$. According to this inequality for $r=n$, it suffices to show that $L_{c,m_0,n}\geq m_0\binom{n}{m_0}$. More generally, we shall show that:
\begin{equation}\label{cor2}
L_{c,m',n}\geq m'\binom{n}{m'}~~~~(\forall m'\in\mathbb{N^*},~m'\leq n).
\end{equation}
Let $m'\in\mathbb{N^*}$ such that $m'\leq n$. To prove \eqref{cor2}, we apply Lemma \ref{l1} for $\mathcal{A}=\mathbb{Z}[\sqrt{-c}]$ by taking for the $u_i$'s the elements $m'+\sqrt{-c},m'+1+\sqrt{-c},\dots,n+\sqrt{-c}$ of $\mathcal{A}$ and for $a$ and $b$ the integers $a=L_{c,m',n}$ and $b=(n-m')!$. For any $k\in\left\lbrace m',m'+1,\dots,n\right\rbrace$, Since $L_{c,m',n}$ is obviously a multiple (in $\mathbb{Z}$, so also in $\mathcal{A}=\mathbb{Z}[\sqrt{-c}]$) of $k^2+c$ and $k^2+c=\left(k+\sqrt{-c}\right)\left(k-\sqrt{-c}\right)$ is a multiple (in $\mathbb{Z}[\sqrt{-c}]$) of $k+\sqrt{-c}$, then $L_{c,m',n}$ is a multiple (in $\mathbb{Z}[\sqrt{-c}]$) of $k+\sqrt{-c}$. This shows that the first condition of Lemma \ref{l1} is satisfied. On the other hand, we have for all $k\in\left\lbrace m',m'+1,\dots,n\right\rbrace$: 
\[\prod_{\begin{subarray}{c}m'\leq \ell\leq n \\ \ell\neq k\end{subarray}}\left\lbrace\left(k+\sqrt{-c}\right)-\left(\ell+\sqrt{-c}\right)\right\rbrace=\prod_{\begin{subarray}{c}m'\leq \ell\leq n\\ \ell\neq k\end{subarray}}(k-\ell)=(-1)^{n-k}(k-m')!(n-k)!,\]
which divides (in $\mathbb{Z}$, so also in $\mathbb{Z}[\sqrt{-c}]$) the integer $(n-m')!$ (since $\frac{(n-m')!}{(k-m')!(n-k)!}=\binom{n-m'}{k-m'}\in\mathbb{Z}$). This shows that the second condition of Lemma \ref{l1} is also satisfied. We thus deduce (by applying Lemma \ref{l1}) that $L_{c,m',n}(n-m')!$ is a multiple (in $\mathbb{Z}[\sqrt{-c}]$) of $\prod_{k=m'}^{n}\left(k+\sqrt{-c}\right)$. So, there exist $x,y\in\mathbb{Z}$ such that:
\begin{equation}\label{**}
L_{c,m',n}(n-m')!=\left(x+y\sqrt{-c}\right)\prod_{k=m'}^{n}\left(k+\sqrt{-c}\right).\end{equation}
Then, by taking the modulus in $\mathbb{C}$ on both sides, we get
\[L_{c,m',n}(n-m')!=\sqrt{x^2+cy^{2}}\prod_{k=m'}^{n}\sqrt{k^2+c}.\]
Next, since $x^2+cy^2\in\mathbb{N}$ and $x^2+cy^2\neq 0$ (because $x^2+cy^2=0$ $\Longrightarrow$ $L_{c,m',n}=0$, which is false) then $x^2+cy^2\geq 1$. Hence
\[L_{c,m',n}=\frac{\sqrt{x^2+cy^2}\prod_{k=m'}^{n}\sqrt{k^2+c}}{(n-m')!}\geq \frac{\prod_{k=m'}^{n}\sqrt{k^2+c}}{(n-m')!}\geq \frac{\prod_{k=m'}^{n}k}{(n-m')!}=m'\binom{n}{m'},\]
as required. This completes the proof of the theorem. 
\end{proof}
 
\noindent Naturally, we have the following question:
\begin{quote}
\textit{How could we improve the Oon lower bound $L_{c,m,n}\geq \frac{\prod_{k=m}^{n}\sqrt{k^2+c}}{(n-m)!}$?.}
\end{quote}
To simplify, suppose that $c=1$ and let $m,n\in\mathbb{N^*}$ such that $m\leq n$. According to Formula \eqref{**}, the positive integer $L_{1,m,n}(n-m)!$ is a multiple (in $\mathbb{Z}[i]$) of the Gauss integer $\prod_{k=m}^{n}(k+i)$. Next, by taking the conjugates (in $\mathbb{C}$) of both sides of \eqref{**}, we obtain that $L_{1,m,n}(n-m)!$ is also a multiple (in $\mathbb{Z}[i]$) of the Gauss integer $\prod_{k=m}^{n}(k-i)$. It follows from those two facts that $L_{1,m,n}(n-m)!$ is a multiple (in $\mathbb{Z}[i]$) of:
\begin{align*}
{\lcm}_{\mathbb{Z}[i]}\left\lbrace \prod_{k=m}^{n}(k+i),\prod_{k=m}^{n}(k-i)\right\rbrace&=\frac{\prod_{k=m}^{n}(k+i)\cdot\prod_{k=m}^{n}(k-i)}{{\gcd}_{\mathbb{Z}[i]}\left\lbrace \prod_{k=m}^{n}(k+i),\prod_{k=m}^{n}(k-i)\right\rbrace}\\&=\frac{\prod_{k=m}^{n}(k^2+1)}{{\gcd}_{\mathbb{Z}[i]}\left\lbrace \prod_{k=m}^{n}(k+i),\prod_{k=m}^{n}(k-i)\right\rbrace}.
\end{align*}
Consequently
\begin{equation}\label{eq4}
L_{1,m,n}\geq \frac{\prod_{k=m}^{n}(k^2+1)}{(n-m)!\left|{\gcd}_{\mathbb{Z}[i]}\left\lbrace \prod_{k=m}^{n}(k+i),\prod_{k=m}^{n}(k-i)\right\rbrace\right|}.
\end{equation}
Remarkably, the trivial upper bound 
\[\left|{\gcd}_{\mathbb{Z}[i]}\left\lbrace \prod_{k=m}^{n}(k+i),\prod_{k=m}^{n}(k-i)\right\rbrace\right|\leq \left|\prod_{k=m}^{n}(k+i)\right|\leq \prod_{k=m}^{n}\sqrt{k^2+1}\]
suffices to establish the Oon lower bound $L_{1,m,n}\geq \frac{\prod_{k=m}^{n}\sqrt{k^2+1}}{(n-m)!}$. So, a nontrivial upper bound for the number  $\left|{\gcd}_{\mathbb{Z}[i]}\left\lbrace \prod_{k=m}^{n}(k+i),\prod_{k=m}^{n}(k-i)\right\rbrace\right|$ certainly gives an improvement of the Oon theorem. On the other hand, for $a,b\in\mathbb{Z}$ such that $(a,b)\neq (0,0)$, we can easily check that ${\gcd}_{\mathbb{Z}[i]}\left(a+bi,a-bi\right)$ is not far from ${\gcd}_{\mathbb{Z}}(a,b)$. Precisely, we have:
\[{\gcd}_{\mathbb{Z}[i]}\left(a+bi,a-bi\right)=\left(\alpha+i\beta\right){\gcd}_{\mathbb{Z}}(a,b),\]
where $\alpha,\beta\in\left\lbrace -1,0,1\right\rbrace$ and $(\alpha,\beta)\neq (0,0)$. So, for the case $c=1$, we are leaded to study the arithmetic function:
\[\begin{array}{rcl}h :~\mathbb{Z}[i]\setminus\left\lbrace 0\right\rbrace &\longrightarrow &\mathbb{N^{*}}\\a+bi ~~& \longmapsto &\gcd (a,b)\end{array},\]
and precisely to find nontrivial upper bounds for the quantities $h\left(\prod_{k=m}^{n}(k+i)\right)$ $(m,n\in\mathbb{N^*},~m\leq n)$. For the general case $(c\in\mathbb{N^*})$, the arithmetic function we need to study is clearly given by:
\[\begin{array}{rcl}h_c :~\mathbb{Z}[\sqrt{-c}]\setminus\left\lbrace 0\right\rbrace &\longrightarrow &\mathbb{N^{*}}\\a+b\sqrt{-c}~~& \longmapsto &\gcd (a,b)\end{array}\]
and the quantities we need to bound from above are $h_c\left(\prod_{k=m}^{n}(k+\sqrt{-c})\right)$ $(m,n\in\mathbb{N^*},~m\leq n)$.

The following proposition has as objective to replace a specific arithmetic language of the ring $\mathbb{Z}[\sqrt{-c}]$ by its analog (more simple) in $\mathbb{Z}$.   
\begin{prop}\label{p1}
Let $c\in\mathbb{N^*}$ and $N,a,b\in\mathbb{Z}$, with $(a,b)\neq (0,0)$. Then, $N$ is a multiple $($in $\mathbb{Z}[\sqrt{-c}])$ of $\left(a+b\sqrt{-c}\right)$ if and only if $N$ is a multiple $($in $\mathbb{Z})$ of $\frac{a^2+cb^2}{\gcd(a,b)}$.  
\end{prop}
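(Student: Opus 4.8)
The plan is to reduce the divisibility statement in the ring $\mathbb{Z}[\sqrt{-c}]$ to an equivalent pair of ordinary integer divisibility conditions by rationalizing the quotient, and then to collapse that pair to a single condition using coprimality. First I would set $d:=\gcd(a,b)$ and write $a=da'$, $b=db'$ with $\gcd(a',b')=1$. Then the candidate divisor simplifies to
\[\frac{a^2+cb^2}{\gcd(a,b)}=\frac{d^2\left(a'^2+cb'^2\right)}{d}=d\left(a'^2+cb'^2\right),\]
which is manifestly a positive integer; writing $D':=a'^2+cb'^2$, the quantity in question is $dD'$.

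Next I would characterize divisibility in $\mathbb{Z}[\sqrt{-c}]$ concretely. Since $a+b\sqrt{-c}$ has nonzero norm $a^2+cb^2$, the integer $N$ is a multiple of $a+b\sqrt{-c}$ in $\mathbb{Z}[\sqrt{-c}]$ if and only if the quotient
\[\frac{N}{a+b\sqrt{-c}}=\frac{N\left(a-b\sqrt{-c}\right)}{a^2+cb^2}=\frac{Na}{a^2+cb^2}-\frac{Nb}{a^2+cb^2}\sqrt{-c}\]
lies in $\mathbb{Z}[\sqrt{-c}]$, that is, if and only if both of its coefficients are integers. Hence the divisibility in $\mathbb{Z}[\sqrt{-c}]$ is equivalent to the conjunction of the two integer conditions $(a^2+cb^2)\mid Na$ and $(a^2+cb^2)\mid Nb$. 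Substituting $a=da'$, $b=db'$ and $a^2+cb^2=d^2D'$ and cancelling one factor of $d$, these become $dD'\mid Na'$ and $dD'\mid Nb'$.

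Finally I would show that the conjunction ``$dD'\mid Na'$ and $dD'\mid Nb'$'' is equivalent to $dD'\mid N$, which is exactly the asserted condition. The implication from $dD'\mid N$ is immediate. For the converse I would invoke Bézout: since $\gcd(a',b')=1$ there exist integers $u,v$ with $ua'+vb'=1$, whence $N=u(Na')+v(Nb')$ is a $\mathbb{Z}$-linear combination of two multiples of $dD'$, so $dD'\mid N$. Chaining these equivalences yields the proposition.

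I do not anticipate a serious obstacle. The one point that requires care is that the two integer conditions do \emph{not} collapse to $(a^2+cb^2)\mid N$ (which would be strictly stronger and false in general), but precisely to $\frac{a^2+cb^2}{\gcd(a,b)}\mid N$; the single cancellation of the common factor $d$, together with the coprimality of $a'$ and $b'$ that powers the Bézout step, is exactly what accounts for the appearance of $\gcd(a,b)$ in the denominator.
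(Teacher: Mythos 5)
Your proof is correct. It differs from the paper's in how it converts divisibility in $\mathbb{Z}[\sqrt{-c}]$ into integer conditions: the paper writes $N=\left(x+y\sqrt{-c}\right)\left(a+b\sqrt{-c}\right)$, identifies real and imaginary parts to get the linear system $N=ax-byc$, $0=bx+ay$, solves the homogeneous equation with Gauss's lemma to parametrize $(x,y)=(-ka',kb')$, and then substitutes back (treating $b=0$ separately and proving the converse by a direct computation), whereas you rationalize the quotient $N/\left(a+b\sqrt{-c}\right)$ once and for all, reducing the statement to the pair of conditions $(a^2+cb^2)\mid Na$ and $(a^2+cb^2)\mid Nb$, and then collapse that pair to $\frac{a^2+cb^2}{\gcd(a,b)}\mid N$ via a Bézout relation $ua'+vb'=1$. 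The two arguments lean on the same essential fact (coprimality of $a'$ and $b'$, exploited through Gauss's lemma in the paper and through Bézout in your version), but yours buys a genuine equivalence chain that proves both directions simultaneously, needs no special case for $b=0$, and makes visible exactly where the single factor of $\gcd(a,b)$ gets cancelled; the paper's version is more constructive in that it exhibits the cofactor $x+y\sqrt{-c}$ explicitly in each direction. The one point worth double-checking in your write-up is the implicit use of the uniqueness of the representation $x+y\sqrt{-c}$ with $x,y\in\mathbb{Q}$ (valid because $\sqrt{-c}\notin\mathbb{Q}$), which justifies reading off integrality coefficientwise; this is harmless but deserves a word.
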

\begin{proof}
The result of the proposition is trivial for $b=0$. Suppose for the sequel that $b\neq 0$.

Suppose that $N$ is a multiple (in $\mathbb{Z}[\sqrt{-c}]$) of $\left(a+b\sqrt{-c}\right)$; that is there exist $x,y\in\mathbb{Z}$ such that:
\[N=\left(x+y\sqrt{-c}\right)\left(a+b\sqrt{-c}\right).\]   
By identifying the real and imaginary parts of the two hand-sides of this equality, we get
\begin{align}
N&=ax-byc, \label{eq5} \\ 0&=bx+ay. \label{eq6}
\end{align}
Next, putting $d:=\gcd(a,b)$, there exist $a',b'\in\mathbb{Z}$, with $b'\neq 0$ and $\gcd(a',b')=1$, such that $a=da'$ and $b=db'$. By substituting these in \eqref{eq6}, we obtain (after simplifying):
\begin{equation}\label{eq7}
b'x=-a'y.
\end{equation}
This last equality shows that $b'$ divides $a'y$. But since $\gcd(a',b')=1$, then (according to the Gauss lemma) $b'$ divides $y$. So there exists $k\in\mathbb{Z}$ such that $y=kb'$. By reporting this in \eqref{eq7}, we get $x=-ka'$. Then, by substituting $x=-ka'=-k\frac{a}{d}$ and $y=kb'=k\frac{b}{d}$ in \eqref{eq5}, we finally obtain
\[N=-k\frac{a^2+cb^2}{d}=-k\frac{a^2+cb^2}{\gcd(a,b)},\]
showing that $N$ is a multiple (in $\mathbb{Z}$) of $\frac{a^2+cb^2}{\gcd(a,b)}$, as required.

Conversely, suppose that $N$ is a multiple (in $\mathbb{Z}$) of $\frac{a^2+cb^2}{\gcd(a,b)}$. Then, there exists $k\in\mathbb{Z}$ such that:
\[N=k\frac{a^2+cb^2}{\gcd(a,b)}=k\frac{a-b\sqrt{-c}}{\gcd(a,b)}\left(a+b\sqrt{-c}\right)=\left(k\frac{a}{\gcd(a,b)}-k\frac{b}{\gcd(a,b)}\sqrt{-c}\right)\left(a+b\sqrt{-c}\right).\]
Since $\left(k\frac{a}{\gcd(a,b)}-k\frac{b}{\gcd(a,b)}\sqrt{-c}\right)\in\mathbb{Z}[\sqrt{-c}]$, the last equality shows that $N$ is a multiple (in $\mathbb{Z}[\sqrt{-c}]$) of $\left(a+b\sqrt{-c}\right)$, as required. This completes the proof of the proposition.    
\end{proof}
From Proposition \ref{p1}, we derive the following corollary, which is the first key step to obtaining the results of this paper. 
\begin{coll}\label{p2}
Let $c,m,n\in\mathbb{N^*}$ such that $m\leq n$. Then, the positive integer $L_{c,m,n}(n-m)!$ is a multiple $($in $\mathbb{Z})$ of the positive integer:
\[\frac{\prod_{k=m}^{n}\left(k^2+c\right)}{h_c\left(\prod_{k=m}^{n}\left(k+\sqrt{-c}\right)\right)}.\] 
\end{coll}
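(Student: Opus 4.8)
The plan is to package the two already-established facts about $L_{c,m,n}(n-m)!$ and feed them into Proposition~\ref{p1}. From Formula~\eqref{**} (which holds verbatim for arbitrary $c$, not only $c=1$, since its derivation only used the generic application of Lemma~\ref{l1} over $\mathbb{Z}[\sqrt{-c}]$) I know that $L_{c,m,n}(n-m)!$ is a multiple, in $\mathbb{Z}[\sqrt{-c}]$, of the element $\prod_{k=m}^{n}\left(k+\sqrt{-c}\right)$. Write this element as $a+b\sqrt{-c}$ with $a,b\in\mathbb{Z}$; note $(a,b)\neq(0,0)$ because its norm $\prod_{k=m}^{n}(k^2+c)$ is nonzero. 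Then Proposition~\ref{p1}, applied with $N=L_{c,m,n}(n-m)!$, translates this $\mathbb{Z}[\sqrt{-c}]$-divisibility into the ordinary integer divisibility
\[
\frac{a^2+cb^2}{\gcd(a,b)}\ \Big|\ L_{c,m,n}(n-m)!.
\]

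The only remaining task is to identify $\dfrac{a^2+cb^2}{\gcd(a,b)}$ with the claimed quantity $\dfrac{\prod_{k=m}^{n}(k^2+c)}{h_c\!\left(\prod_{k=m}^{n}(k+\sqrt{-c})\right)}$. This is immediate from the two definitions: the numerator $a^2+cb^2$ is precisely the norm of $a+b\sqrt{-c}=\prod_{k=m}^{n}(k+\sqrt{-c})$, which equals $\left|\prod_{k=m}^{n}(k+\sqrt{-c})\right|^2=\prod_{k=m}^{n}(k^2+c)$; and the denominator $\gcd(a,b)$ is exactly the value $h_c(a+b\sqrt{-c})=h_c\!\left(\prod_{k=m}^{n}(k+\sqrt{-c})\right)$ by the very definition of the arithmetic function $h_c$. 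Substituting these two identifications gives that $L_{c,m,n}(n-m)!$ is a multiple of the stated integer.

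Finally I would confirm that this quantity is indeed a positive integer, as the statement asserts. Positivity is clear since $\prod_{k=m}^{n}(k^2+c)>0$ and $h_c(\cdot)\in\mathbb{N}^*$. Integrality follows a posteriori from the divisibility relation itself, but it is cleaner to observe directly that $\gcd(a,b)$ divides both $a^2$ and $cb^2$, hence divides $a^2+cb^2$, so $\frac{a^2+cb^2}{\gcd(a,b)}\in\mathbb{Z}$. I do not anticipate a genuine obstacle here: the substance of the result lives entirely in Proposition~\ref{p1} and in Formula~\eqref{**}, and this corollary is essentially a bookkeeping assembly of those two ingredients. The one point demanding a word of care is verifying that the derivation of~\eqref{**} goes through for general $c$ and that the norm computation $N(a+b\sqrt{-c})=\prod_{k=m}^{n}(k^2+c)$ is stated cleanly, since the surrounding discussion specialized to $c=1$ for expository reasons.
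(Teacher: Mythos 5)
Your proposal is correct and follows exactly the paper's own route: invoke Formula~\eqref{**} to get the $\mathbb{Z}[\sqrt{-c}]$-divisibility and then apply Proposition~\ref{p1}, identifying $a^2+cb^2$ with the norm $\prod_{k=m}^{n}(k^2+c)$ and $\gcd(a,b)$ with $h_c\left(\prod_{k=m}^{n}(k+\sqrt{-c})\right)$. The extra care you take about the general-$c$ validity of~\eqref{**} and the integrality of the quotient is sound but not needed beyond what the paper already establishes.
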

\begin{proof}
Formula \eqref{**} (obtained during our new proof of Theorem \ref{03}) shows that $L_{c,m,n}(n-m)!$ is a multiple (in $\mathbb{Z}[\sqrt{-c}]$) of $\prod_{k=m}^{n}\left(k+\sqrt{-c}\right)$. But, according to Proposition \ref{p1}, this last property is equivalent to the statement of the corollary. 
\end{proof}
In view of Corollary \ref{p2}, to bound from below $L_{c,m,n}$ $(c,m,n\in\mathbb{N^*},~m\leq n)$, it suffices to bound from above $h_c\left(\prod_{k=m}^{n}(k+\sqrt{-c})\right)$. Likewise, to find a nontrivial (rational) divisor of $L_{c,m,n}$, it suffices to find a nontrivial multiple of $h_c\left(\prod_{k=m}^{n}(k+\sqrt{-c})\right)$. This is what we will do in what follows.
 
\subsection{An explicit Bézout identity}\label{sub22}
In the following, let $c\in\mathbb{N^*}$ and $k\in\mathbb{N}$ be fixed and define
\begin{align*}
P_{k}\left(X\right)&:=\left(X+\sqrt{-c}\right)\left(X-1+\sqrt{-c}\right)\cdots \left(X-k+\sqrt{-c}\right):=A_{k}\left(X\right)+B_{k}\left(X\right)\sqrt{-c}, \\ \overline{P_{k}}\left(X\right)&:=\left(X-\sqrt{-c}\right)\left(X-1-\sqrt{-c}\right)\cdots \left(X-k-\sqrt{-c}\right):=A_{k}\left(X\right)-B_{k}\left(X\right)\sqrt{-c},
\end{align*}
where it is understood that $A_{k},B_{k}\in\mathbb{Z}[X]$. In what follows, we find nontrivial multiples for the positive integers $h_{c}\left(P_{k}(n)\right)=\gcd\left(A_{k}(n),B_{k}(n)\right)$ $(n\geq 1)$. To do so, we look for two polynomial sequences $\left(a_{k}(n)\right)_{n}$ and $\left(b_{k}(n)\right)_{n}$ so that the polynomial sequence $\left(a_{k}(n)A_{k}(n)+b_{k}(n)B_{k}(n)\right)_{n}$ be independent on $n$. Clearly, this leads to looking for two polynomials $U_{k},V_{k}\in\mathbb{Q}[X]$ which satisfy the Bézout identity:
\[U_{k}\left(X\right)A_{k}\left(X\right)+V_{k}\left(X\right)B_{k}\left(X\right)=1.\]
Next, since $A_{k}=\frac{P_{k}+\overline{P_{k}}}{2}$ and $B_{k}=\frac{P_{k}-\overline{P_{k}}}{2\sqrt{-c}}$, the problem is equivalent to looking for $\alpha_{k},\beta_{k}\in\mathbb{Q}\left(\sqrt{-c}\right)[X]$ such that:
\[\alpha_{k}\left(X\right)P_{k}\left(X\right)+\beta_{k}\left(X\right)\overline{P_{k}}\left(X\right)=1.\]
Let us first justify the existence of such $\alpha_k$ and $\beta_k$. Denoting by $Z\left(P\right)$ the set of all the complex roots of a polynomial $P\in\mathbb{C}[X]$, we have clearly:
\[Z\left(P_{k}\right)=\left\lbrace -\sqrt{-c},1-\sqrt{-c},\dots,k-\sqrt{-c}\right\rbrace ~\text{and}~Z\left(\overline{P_{k}}\right)=\left\lbrace \sqrt{-c},1+\sqrt{-c},\dots,k+\sqrt{-c}\right\rbrace,\]
showing that $Z\left(P_{k}\right)\cap Z\left(\overline{P_{k}}\right)=\emptyset$; that is $P_{k}$ and $\overline{P_{k}}$ do not have a common root in $\mathbb{C}$. This implies that $P_{k}$ and $\overline{P_{k}}$ are coprime in $\mathbb{C}[X]$; so coprime also in $\mathbb{Q}\left(\sqrt{-c}\right)[X]$ (since $P_{k},\overline{P_{k}}\in\mathbb{Q}\left(\sqrt{-c}\right)[X]$). It follows (according to Bézout's theorem) that there exist $\alpha_{k},\beta_{k}\in\mathbb{Q}\left(\sqrt{-c}\right)[X]$ such that: $\alpha_{k}P_{k}+\beta_{k}\overline{P_{k}}=1$, as required.

Now, to find explicitly such $\alpha_k$ and $\beta_k$, we need the following more precise version of Bézout's theorem:
\begin{thm}\label{p3}
Let $\mathbb{K}$ be a field and $P$ and $Q$ be two non-constant polynomials of $\mathbb{K}[X]$ such that ${\gcd}_{\mathbb{K}[X]}\left(P,Q\right)=1$. Then, there exists a unique couple $\left(U,V\right)$ of polynomials of $\mathbb{K}[X]$, with $\deg U<\deg Q$ and $\deg V<\deg P$, such that:
\[PU+QV=1.\]
\end{thm}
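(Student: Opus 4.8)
The plan is to prove existence and uniqueness separately, exploiting the Euclidean (hence principal ideal) structure of $\mathbb{K}[X]$. For existence I would start from the ordinary Bézout theorem: since $\gcd_{\mathbb{K}[X]}(P,Q)=1$, there are $U_0,V_0\in\mathbb{K}[X]$ with $PU_0+QV_0=1$, but with no control on degrees. To force $\deg U<\deg Q$, I would perform the Euclidean division of $U_0$ by $Q$, writing $U_0=QS+U$ with $\deg U<\deg Q$ (taking $U=0$ if $Q\mid U_0$). Substituting gives $PU+Q(PS+V_0)=1$, so setting $V:=PS+V_0$ yields a solution with $\deg U<\deg Q$.

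The key observation is that the bound on $\deg V$ then comes for free. From $QV=1-PU$ and $\deg(PU)=\deg P+\deg U\leq \deg P+\deg Q-1$, together with $\deg 1=0<\deg P+\deg Q$ (valid because $P,Q$ are non-constant, so $\deg P+\deg Q\geq 2$), one gets $\deg Q+\deg V=\deg(QV)<\deg P+\deg Q$, whence $\deg V<\deg P$, as required.

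For uniqueness, suppose $(U_1,V_1)$ and $(U_2,V_2)$ both satisfy the conclusion. Subtracting the two identities gives $P(U_1-U_2)=-Q(V_1-V_2)$, so $Q$ divides $P(U_1-U_2)$; since $\gcd(P,Q)=1$, Gauss's lemma forces $Q\mid(U_1-U_2)$. But $\deg(U_1-U_2)<\deg Q$, and the only multiple of $Q$ of degree strictly below $\deg Q$ is $0$; hence $U_1=U_2$, and then $Q(V_1-V_2)=0$ gives $V_1=V_2$ because $\mathbb{K}[X]$ is an integral domain and $Q\neq 0$.

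The argument is essentially routine, so I do not expect a serious obstacle; the only point demanding care is the degree bookkeeping that makes the constraint $\deg V<\deg P$ automatic rather than an additional condition to be arranged by hand. A cleaner but equivalent packaging would be to consider the $\mathbb{K}$-linear map $\Phi(U,V):=PU+QV$ from $\{U:\deg U<\deg Q\}\times\{V:\deg V<\deg P\}$ into $\{W:\deg W<\deg P+\deg Q\}$. Both the source and target are $\mathbb{K}$-vector spaces of dimension $\deg P+\deg Q$, the uniqueness computation above shows $\Phi$ is injective, and injectivity of a linear map between equidimensional spaces yields surjectivity; thus $1$ has a unique preimage, establishing existence and uniqueness simultaneously.
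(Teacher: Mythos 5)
Your proof is correct and follows essentially the same route as the paper: start from unrestricted B\'ezout coefficients $U_0,V_0$, reduce by Euclidean division, and use degree counting both to secure the remaining degree bound and to prove uniqueness. The minor differences (you divide only $U_0$ by $Q$ and deduce $\deg V<\deg P$ for free from $QV=1-PU$, whereas the paper divides both coefficients and shows the cross term vanishes; and you run uniqueness through Gauss's lemma rather than the paper's identity $P\left(UV_*-U_*V\right)=V_*-V$) are cosmetic, and your dimension-counting repackaging via the map $\Phi$ is a valid alternative aside.
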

\begin{proof}
Since ${\gcd}_{\mathbb{K}[X]}\left(P,Q\right)=1$, then (according to Bézout's theorem) there exist $U_0,V_0\in\mathbb{K}[X]$ such that:
\[PU_0+QV_0=1.\]
Next, consider in $\mathbb{K}[X]$ the euclidean division of $U_0$ by $Q$ and the euclidean division of $V_0$ by $\left(-P\right)$:
\begin{align*}
U_0&=U_1Q+U\\V_0&=V_1\left(-P\right)+V,
\end{align*}
where $U_1,V_1,U,V\in\mathbb{K}[X]$, $\deg U<\deg Q$ and $\deg V<\deg \left(-P\right)=\deg P$. So, we have
$$
PU+QV = P\left(U_0-U_1Q\right)+Q\left(V_0+V_1P\right) = PQ\left(V_1-U_1\right)+PU_0+QV_0 = PQ\left(V_1-U_1\right)+1 .
$$
If $V_1-U_1\neq 0$, then the last equality implies that $\deg\left(PU+QV\right)\geq \deg\left(PQ\right)$, which is impossible, since $\deg U<\deg Q$ and $\deg V<\deg P$. Thus $V_1-U_1=0$, which gives $PU+QV=1$. The existence of the couple $\left(U,V\right)$ as required by the theorem is proved. It remains to prove the uniqueness of $\left(U,V\right)$. Let $\left(U_*,V_*\right)$ another couple of polynomials of $\mathbb{K}[X]$, with $\deg U_* <\deg Q$, $\deg V_* <\deg P$ and $PU_*+QV_*=1$ and let us prove that $\left(U_*,V_*\right)=\left(U,V\right)$. We have
\begin{align*}
P\left(UV_*-U_*V\right)=\left(PU\right)V_*-\left(PU_*\right)V=\left(1-QV\right)V_*-\left(1-QV_*\right)V=V_*-V,
\end{align*}
showing that the polynomial $\left(V_*-V\right)$ is a multiple of $P$ in $\mathbb{K}[X]$. But since $\deg\left(V_*-V\right)<\deg P$ (because $\deg V<\deg P$ and $\deg V_*<\deg P$), we have inevitably $V_*-V=0$; hence $V_*=V$. Using this, we get $PU_*=1-QV_*=1-QV=PU$. Thus $U_*=U$. Consequently, we have $\left(U_*,V_*\right)=\left(U,V\right)$, as required. This completes the proof of the theorem. 
\end{proof}
In our context, the application of Theorem \ref{p3} gives the following corollary:
\begin{coll}\label{c1}
There exists a unique polynomial $\alpha_k\in\mathbb{C}[X]$, with degree $\leq k$, such that:
\[\alpha_{k}P_{k}+\overline{\alpha_{k}}\overline{P_{k}}=1.\]
\end{coll}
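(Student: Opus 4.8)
The plan is to apply the refined Bézout theorem (Theorem \ref{p3}) over the field $\mathbb{K}=\mathbb{C}$ to the pair $\left(P_k,\overline{P_k}\right)$, and then to exploit its \emph{uniqueness} clause together with the involutive, ring-compatible conjugation on $\mathbb{C}[X]$ to force the second Bézout cofactor to be the conjugate of the first. First I would record that $P_k$ and $\overline{P_k}$ both have degree $k+1\geq 1$, hence are non-constant, and that they are coprime in $\mathbb{C}[X]$ — the latter being exactly what was verified just above via $Z\left(P_k\right)\cap Z\left(\overline{P_k}\right)=\emptyset$. Thus Theorem \ref{p3}, applied with $P=P_k$ and $Q=\overline{P_k}$, yields a unique pair $\left(\alpha_k,\beta_k\right)\in\mathbb{C}[X]^2$ with $\deg\alpha_k<\deg\overline{P_k}=k+1$ and $\deg\beta_k<\deg P_k=k+1$ — equivalently $\deg\alpha_k\leq k$ and $\deg\beta_k\leq k$ — satisfying
\[\alpha_k P_k+\beta_k\overline{P_k}=1.\]

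The key step is to conjugate this identity coefficientwise. Since conjugation on $\mathbb{C}[X]$ respects sums and products and satisfies $\overline{\overline{P_k}}=P_k$ and $\overline{1}=1$, applying it gives
\[\overline{\beta_k}\,P_k+\overline{\alpha_k}\,\overline{P_k}=1.\]
Because conjugation sends a nonzero complex number to a nonzero complex number, it preserves degrees, so $\deg\overline{\alpha_k}=\deg\alpha_k\leq k$ and $\deg\overline{\beta_k}=\deg\beta_k\leq k$. Hence the pair $\left(\overline{\beta_k},\overline{\alpha_k}\right)$ meets the very same degree constraints as $\left(\alpha_k,\beta_k\right)$ and solves the same Bézout relation. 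Invoking the uniqueness half of Theorem \ref{p3} forces $\overline{\beta_k}=\alpha_k$, that is $\beta_k=\overline{\alpha_k}$, and substituting this back produces $\alpha_k P_k+\overline{\alpha_k}\,\overline{P_k}=1$ with $\deg\alpha_k\leq k$, which is the asserted existence.

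For the uniqueness claimed in the corollary, I would argue that any $\alpha_k'\in\mathbb{C}[X]$ of degree $\leq k$ with $\alpha_k'P_k+\overline{\alpha_k'}\,\overline{P_k}=1$ gives a Bézout pair $\left(\alpha_k',\overline{\alpha_k'}\right)$ obeying the degree bounds of Theorem \ref{p3}, so it must coincide with $\left(\alpha_k,\beta_k\right)$; in particular $\alpha_k'=\alpha_k$. I do not expect a genuine obstacle here: the entire argument turns on the single observation that the coefficientwise conjugate of a bounded-degree Bézout pair is again a bounded-degree Bézout pair \emph{for the swapped factors}, after which the uniqueness in Theorem \ref{p3} does all the work. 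The only points that must be stated carefully are that conjugation is an involution and that it leaves degrees unchanged, together with the harmless reindexing $P\leftrightarrow Q$ that matches $\left(\overline{\beta_k},\overline{\alpha_k}\right)$ against $\left(\alpha_k,\beta_k\right)$.
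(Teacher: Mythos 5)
Your proposal is correct and follows essentially the same route as the paper: apply Theorem \ref{p3} with $\mathbb{K}=\mathbb{C}$ to the coprime pair $\left(P_k,\overline{P_k}\right)$, conjugate the resulting identity, and invoke the uniqueness clause to conclude $\beta_k=\overline{\alpha_k}$. Your added remark spelling out the uniqueness of $\alpha_k$ itself (via the same uniqueness clause) is a small but welcome completion of a point the paper leaves implicit.
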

\begin{proof}
According to Theorem \ref{p3} (applied for $\mathbb{K}=\mathbb{C}$ and $\left(P,Q\right)=\left(P_k,\overline{P_k}\right)$), there exists a unique couple $\left(\alpha_k,\beta_k\right)$ of polynomials of $\mathbb{C}[X]$, with $\deg \alpha_k<\deg \overline{P_k}=k+1$ and $\deg \beta_k<\deg P_k=k+1$, such that $\alpha_{k}P_{k}+\beta_{k}\overline{P_{k}}=1$. By taking the conjugates in $\mathbb{C}[X]$ of both sides of the last equality, we derive that $\overline{\alpha_{k}}\overline{P_{k}}+\overline{\beta_{k}}P_{k}=1$, that is $\overline{\beta_{k}}P_{k}+\overline{\alpha_{k}}\overline{P_{k}}=1$. Since $\deg \overline{\beta_k}=\deg \beta_k<k+1$ and $\deg \overline{\alpha_k}=\deg \alpha_k<k+1$, this shows that the couple $\left(\overline{\beta_k},\overline{\alpha_k}\right)$ satisfies the characteristic property of the couple $\left(\alpha_k,\beta_k\right)$. Thus $\left(\overline{\beta_k},\overline{\alpha_k}\right)=\left(\alpha_k,\beta_k\right)$, that is $\beta_k=\overline{\alpha_k}$. Consequently, we have $\alpha_{k}P_{k}+\overline{\alpha_{k}}\overline{P_{k}}=1$. This completes the proof of the corollary.   
\end{proof}
Now, we are going to determine the explicit expression of the polynomial $\alpha_k$ announced by Corollary \ref{c1}. By replacing, in the identity $\alpha_{k}\left(X\right)P_{k}\left(X\right)+\overline{\alpha_{k}}\left(X\right)\overline{P_{k}}\left(X\right)=1$, the indeterminate $X$ by the numbers $s+\sqrt{-c}$ $(s=0,1,\dots,k)$, we get  
\begin{equation}\label{dec1}
\alpha_k\left(s+\sqrt{-c}\right)=\frac{1}{P_k\left(s+\sqrt{-c}\right)}~~(\forall s\in\left\lbrace 0,1,\dots,k\right\rbrace).
\end{equation}
(since $\overline{P_{k}}\left(s+\sqrt{-c}\right)=0$ for $s=0,1,\dots,k$). So the values of $\alpha_{k}$ are known for $(k+1)$ equidistant points with distance $1$. Since $\deg\alpha_k\leq k$, this is sufficient to determine the expression of $\alpha_{k}\left(X\right)$ by using for example the Newton forward interpolation formula. Doing so, we obtain that: 
\[\alpha_k\left(X\right)=\sum_{\ell=0}^{k}\frac{\left(\Delta^{\ell}\alpha_k\right)\left(\sqrt{-c}\right)}{\ell!}\left(X-\sqrt{-c}\right)^{\underline{\ell}}.\]
Then, by using \eqref{dece}, we derive that:
\begin{align*}
\alpha_{k}\left(X\right)&=\sum_{\ell=0}^{k}\sum_{j=0}^{\ell}\frac{(-1)^{\ell-j}}{\ell!}\binom{\ell}{j}\alpha_{k}\left(j+\sqrt{-c}\right)\left(X-\sqrt{-c}\right)^{\underline{\ell}}\\
&=\sum_{\ell=0}^{k}\left\lbrace\frac{1}{\ell!}\sum_{j=0}^{\ell}(-1)^{\ell-j}\binom{\ell}{j}\alpha_{k}\left(j+\sqrt{-c}\right)\right\rbrace\left(X-\sqrt{-c}\right)^{\underline{\ell}}\\
&=\sum_{\ell=0}^{k}\left\lbrace\frac{1}{\ell!}\sum_{j=0}^{\ell}(-1)^{\ell-j}\binom{\ell}{j}\frac{1}{P_{k}\left(j+\sqrt{-c}\right)}\right\rbrace\left(X-\sqrt{-c}\right)^{\underline{\ell}}
\end{align*}  
(according to \eqref{dec1}). So, by setting for all $\ell \in\left\lbrace 0,1,\dots,k\right\rbrace$:
\begin{equation}\label{dec2}
\Theta_{k,\ell}:=\frac{1}{\ell!}\sum_{j=0}^{\ell}(-1)^{\ell-j}\binom{\ell}{j}\frac{1}{P_{k}(j+\sqrt{-c})},
\end{equation}
we get
\begin{equation}\label{dec3}
\alpha_{k}\left(X\right)=\sum_{\ell=0}^{k}\Theta_{k,\ell}\left(X-\sqrt{-c}\right)^{\underline{\ell}}.
\end{equation}
It remains to simplify the expressions of the numbers $\Theta_{k,\ell}$ $(0\leq\ell\leq k)$. To do so, we introduce the rational functions $R_{k,\ell}$ $(0\leq\ell\leq k)$, defined by:
\begin{equation}\label{dec4}
R_{k,\ell}(z):=\frac{1}{\ell!}\sum_{j=0}^{\ell}(-1)^{\ell-j}\binom{\ell}{j}\frac{1}{P_{k}(z+j+\sqrt{-c})},
\end{equation}
so that we have
\begin{equation}\label{dec5}
\Theta_{k,\ell}=R_{k,\ell}(0)~~~~(\forall \ell\in\left\lbrace 0,1,\dots,k\right\rbrace).
\end{equation}
The common domain of holomorphy of the functions $R_{k,\ell}$ $(0\leq\ell\leq k)$ is clearly the open connected region $D$ of $\mathbb{C}$, given by:
\[D:=\mathbb{C}\setminus\lbrace j - 2 \sqrt{-c} ~;~ j \in \Z \text{ and} -k\leq j\leq k\rbrace.\]
Using the principle of analytical continuation together with the theory of the gamma and beta functions, we can find another expression of $R_{k,\ell}$ $(0\leq\ell\leq k)$, which is simpler than the above. We have the following proposition:
\begin{prop}\label{pp}
For all $\ell \in \mathbb{N}$, with $\ell\leq k$, and all $z\in D$, we have:
\begin{equation}\label{dec6}
R_{k,\ell}(z)=\frac{(-1)^{k+\ell}}{z+2\sqrt{-c}}\binom{k+\ell}{\ell}\frac{1}{\left(k-2\sqrt{-c}-z\right)^{\underline{k}}\left(\ell+2\sqrt{-c}+z\right)^{\underline{\ell}}}.
\end{equation}
\end{prop}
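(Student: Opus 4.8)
The plan is to collapse the finite sum defining $R_{k,\ell}(z)$ into a single Beta integral. First I would introduce the shorthand $w := z + 2\sqrt{-c}$ and observe that, for each $j$,
\[
P_k\left(z + j + \sqrt{-c}\right) = \prod_{i=0}^{k}\left(w + j - i\right) = (w+j)(w+j-1)\cdots(w+j-k) = \frac{\Gamma(w+j+1)}{\Gamma(w+j-k)},
\]
so that $\dfrac{1}{P_k(z+j+\sqrt{-c})} = \dfrac{\Gamma(w+j-k)}{\Gamma(w+j+1)}$. Since $\sqrt{-c}$ is purely imaginary, $\operatorname{Re} w = \operatorname{Re} z$; I would therefore fix, for the moment, $z$ with $\operatorname{Re} z > k$, so that every exponent appearing below has positive real part and all the integrals converge.

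Next I would invoke the Beta-integral representation $\dfrac{\Gamma(w+j-k)}{\Gamma(w+j+1)} = \dfrac{1}{k!}B(w+j-k,\,k+1) = \dfrac{1}{k!}\int_0^1 t^{w+j-k-1}(1-t)^k\,dt$, valid in this half-plane. Substituting into \eqref{dec4} and interchanging the (finite) sum with the integral gives
\[
R_{k,\ell}(z) = \frac{1}{\ell!\,k!}\int_0^1 t^{w-k-1}(1-t)^k\left(\sum_{j=0}^{\ell}(-1)^{\ell-j}\binom{\ell}{j}t^j\right)dt.
\]
The inner sum collapses by the binomial theorem to $(t-1)^\ell = (-1)^\ell(1-t)^\ell$, so the bracket merges with $(1-t)^k$ and I am left with a single Beta integral,
\[
R_{k,\ell}(z) = \frac{(-1)^\ell}{\ell!\,k!}\int_0^1 t^{w-k-1}(1-t)^{k+\ell}\,dt = \frac{(-1)^\ell}{\ell!\,k!}\,B\!\left(w-k,\,k+\ell+1\right).
\]

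Converting this Beta value back to Gamma functions and using $\tfrac{(k+\ell)!}{\ell!\,k!} = \binom{k+\ell}{\ell}$ yields $R_{k,\ell}(z) = (-1)^\ell\binom{k+\ell}{\ell}\dfrac{\Gamma(w-k)}{\Gamma(w+\ell+1)}$. The remaining work is algebraic bookkeeping: the ratio $\dfrac{\Gamma(w-k)}{\Gamma(w+\ell+1)}$ equals $\dfrac{1}{\prod_{i=-k}^{\ell}(w+i)}$, and splitting this product as $w\cdot(w-1)\cdots(w-k)\cdot(w+1)\cdots(w+\ell)$ and repackaging the two flanking blocks as the falling factorials $(k-w)^{\underline{k}}$ (producing a sign $(-1)^k$) and $(\ell+w)^{\underline{\ell}}$ reproduces exactly the right-hand side of \eqref{dec6}, with $w = z + 2\sqrt{-c}$ restored. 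Finally, since both $R_{k,\ell}$ and the claimed expression are rational functions holomorphic on the connected region $D$ (here the hypothesis $\ell\leq k$ is what guarantees the poles of the right-hand side stay inside the excluded set), and I have established their equality on the open set $\{\operatorname{Re} z > k\}\cap D$, the principle of analytic continuation forces equality throughout $D$.

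The main obstacle I expect is not any single computation but the analytic scaffolding that knits them together: the Beta-integral representation and the term-by-term integration are legitimate only in the half-plane $\operatorname{Re} z > k$, so the honest structure of the argument is to prove the identity there first and only afterwards extend it to all of $D$ by analyticity. Keeping the signs and the index ranges straight in the final falling-factorial rearrangement is the other place demanding care, but it is routine.
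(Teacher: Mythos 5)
Your proposal is correct and follows essentially the same route as the paper's proof: restrict to $\Re(z)>k$, convert each $1/P_k(z+j+\sqrt{-c})$ into a Beta integral via the Gamma function, interchange the finite sum with the integral, collapse the alternating sum by the binomial theorem into $(-1)^\ell(1-t)^\ell$, evaluate the resulting single Beta integral, and extend to all of $D$ by analytic continuation. The only differences are cosmetic (your substitution $w=z+2\sqrt{-c}$, and your explicit remark on why $\ell\leq k$ keeps the poles inside the excluded set, which is a nice touch the paper leaves implicit).
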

\begin{proof}
Let $\ell \in \mathbb{N}$ such that $\ell \leq k$. According to the principle of analytical continuation, it suffices to prove Formula \eqref{dec6} for $z\in\mathbb{C}$, such that $\Re(z)>k$. For a such $z$, we have: \pagebreak
\begin{align*}
R_{k,\ell}(z)&:=\frac{1}{\ell!}\sum_{j=0}^{\ell}(-1)^{\ell-j}\binom{\ell}{j}\frac{1}{P_{k}(z+j+\sqrt{-c})}\\
&=\frac{1}{\ell!}\sum_{j=0}^{\ell}(-1)^{\ell-j}\binom{\ell}{j}\frac{1}{\left(z+j+2\sqrt{-c}\right)\left(z+j-1+2\sqrt{-c}\right)\cdots\left(z+j-k+2\sqrt{-c}\right)}\\
&=\frac{1}{\ell!}\sum_{j=0}^{\ell}(-1)^{\ell-j}\binom{\ell}{j}\frac{\Gamma\left(z+j-k+2\sqrt{-c}\right)}{\Gamma\left(z+j+1+2\sqrt{-c}\right)}\\
&=\frac{1}{\ell!}\sum_{j=0}^{\ell}(-1)^{\ell-j}\binom{\ell}{j}\frac{1}{k!}\beta\left(z+j-k+2\sqrt{-c},k+1\right)\\
&=\frac{1}{k!\ell!}\sum_{j=0}^{\ell}\left[(-1)^{\ell-j}\binom{\ell}{j}\int_{0}^{1}t^{z+j-k-1+2\sqrt{-c}}(1-t)^{k}\mathrm{d}t\right] \\
&=\frac{1}{k!\ell!}\int_{0}^{1} t^{z-k-1+2\sqrt{-c}}(1-t)^{k}\left\lbrace \sum_{j=0}^{\ell}(-1)^{\ell-j}\binom{\ell}{j}t^{j}\right\rbrace \mathrm{d}t \\
&=\frac{1}{k!\ell!}\int_{0}^{1} t^{z-k-1+2\sqrt{-c}}(1-t)^{k}\left(t-1\right)^{\ell} \mathrm{d}t \\
&=\frac{(-1)^{\ell}}{k!\ell!}\int_{0}^{1} t^{z-k-1+2\sqrt{-c}}(1-t)^{k+\ell}\mathrm{d}t \\
&=\frac{(-1)^\ell}{k!\ell!}\beta\left(z-k+2\sqrt{-c},k+\ell+1\right)\\
&=\frac{(-1)^\ell}{k!\ell!}\frac{\Gamma\left(z-k+2\sqrt{-c}\right)\Gamma\left(k+\ell+1\right)}{\Gamma\left(z+\ell+1+2\sqrt{-c}\right)}\\
&=(-1)^\ell\binom{k+\ell}{\ell}\frac{1}{\left(z+\ell+2\sqrt{-c}\right)\left(z+\ell-1+2\sqrt{-c}\right)\cdots\left(z-k+2\sqrt{-c}\right)}\\
&=\frac{(-1)^{k+\ell}}{z+2\sqrt{-c}}\binom{k+\ell}{\ell}\frac{1}{\left(k-2\sqrt{-c}-z\right)^{\underline{k}}\left(\ell+2\sqrt{-c}+z\right)^{\underline{\ell}}},
\end{align*}
as required. This completes the proof.
\end{proof}

From Proposition \ref{pp}, we immediately derive a simpler explicit expression of $\alpha_k\left(X\right)$. We have the following corollary:
\begin{coll}\label{jan1}
We have:
\[\alpha_{k}\left(X\right)=\frac{1}{2\sqrt{-c}\left(k-2\sqrt{-c}\right)^{\underline{k}}}\sum_{\ell=0}^{k}\frac{(-1)^{k+\ell}\binom{k+\ell}{\ell}}{\left(\ell+2\sqrt{-c}\right)^{\underline{\ell}}}\left(X-\sqrt{-c}\right)^{\underline{\ell}}.\]
\end{coll}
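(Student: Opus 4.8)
The plan is to observe that this corollary is a direct substitution into the formula of Proposition \ref{pp}, with no genuine new content: essentially all the work has already been done in establishing Proposition \ref{pp}. Starting point is the Newton interpolation expression \eqref{dec3},
\[\alpha_{k}\left(X\right)=\sum_{\ell=0}^{k}\Theta_{k,\ell}\left(X-\sqrt{-c}\right)^{\underline{\ell}},\]
together with the identity \eqref{dec5} which identifies the coefficients as $\Theta_{k,\ell}=R_{k,\ell}(0)$. So the only thing I need to do is evaluate the closed form of $R_{k,\ell}$ given by Proposition \ref{pp} at the point $z=0$.

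Before substituting, I would note that $z=0$ does lie in the region $D$ of validity of Formula \eqref{dec6}: the excluded points are of the shape $j-2\sqrt{-c}$ with $-k\leq j\leq k$, and since $\sqrt{-c}$ is purely imaginary and nonzero (as $c\in\mathbb{N^*}$), each such point has nonzero imaginary part and hence cannot equal the real number $0$. Thus Proposition \ref{pp} applies at $z=0$ and yields
\[R_{k,\ell}(0)=\frac{(-1)^{k+\ell}}{2\sqrt{-c}}\binom{k+\ell}{\ell}\frac{1}{\left(k-2\sqrt{-c}\right)^{\underline{k}}\left(\ell+2\sqrt{-c}\right)^{\underline{\ell}}}.\]

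Finally I would insert this value of $\Theta_{k,\ell}=R_{k,\ell}(0)$ back into \eqref{dec3}, and then factor out of the sum the quantity $\frac{1}{2\sqrt{-c}\,(k-2\sqrt{-c})^{\underline{k}}}$, which does not depend on the summation index $\ell$. What remains inside the sum is exactly $\frac{(-1)^{k+\ell}\binom{k+\ell}{\ell}}{(\ell+2\sqrt{-c})^{\underline{\ell}}}\left(X-\sqrt{-c}\right)^{\underline{\ell}}$, giving precisely the claimed expression for $\alpha_k(X)$. There is no real obstacle here: the step is purely mechanical once Proposition \ref{pp} is in hand, and the only mild care needed is the membership $0\in D$ justifying the evaluation.
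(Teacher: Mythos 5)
Your proposal is correct and follows exactly the paper's own (one-line) proof: substitute $z=0$ into Formula \eqref{dec6}, identify $\Theta_{k,\ell}=R_{k,\ell}(0)$ via \eqref{dec5}, and insert into \eqref{dec3}. Your additional verification that $0\in D$ (since the excluded points $j-2\sqrt{-c}$ all have nonzero imaginary part) is a detail the paper leaves implicit, and it is a correct and welcome precaution.
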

\begin{proof}
This immediately follows from Formulas \eqref{dec3}, \eqref{dec5} and \eqref{dec6}. 
\end{proof}

\subsection{Nontrivial multiples of some values of $h_c$}

In this subsection, we preserve the notations of Subsection \ref{sub22}. From Corollary \ref{jan1}, we derive the following theorem:
\begin{thm}\label{jan2}
For all $c,n,m\in\mathbb{N^*}$, with $m\leq n$, we have:
\[h_c\left(\prod_{\ell=m}^{n}\left(\ell+\sqrt{-c}\right)\right)~~\text{divides}~~c\prod_{\ell=1}^{n-m}(\ell^2+4c).\] 
\end{thm}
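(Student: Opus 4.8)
The plan is to turn the divisibility claim into the statement that the integer $D := c\prod_{\ell=1}^{n-m}(\ell^2+4c)$ admits an explicit $\mathbb{Z}$-linear expression in $A_k(n)$ and $B_k(n)$, where $k := n-m$. Since $g := h_c\!\left(\prod_{\ell=m}^{n}(\ell+\sqrt{-c})\right) = \gcd(A_k(n),B_k(n))$ divides every $\mathbb{Z}$-linear combination of $A_k(n)$ and $B_k(n)$, this yields $g \mid D$. The first step is the identification $\prod_{\ell=m}^{n}(\ell+\sqrt{-c}) = P_{k}(n)$ (reindex $s\mapsto n-s$ in the definition of $P_k$), so that indeed $g = \gcd(A_k(n),B_k(n))$; write $\pi := P_k(n) = A + B\sqrt{-c}$ with $A := A_k(n)$, $B := B_k(n)\in\mathbb{Z}$.

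Next I would feed the integer $n$ into the Bézout identity of Corollary \ref{c1}, obtaining $\alpha_k(n)\pi + \overline{\alpha_k(n)}\,\bar\pi = 1$ (using $\overline{\alpha_k}(n)=\overline{\alpha_k(n)}$ since $n$ is real). Multiplying by the rational integer $D$ and using that conjugation fixes $D$, this becomes $D = D\alpha_k(n)\pi + \overline{D\alpha_k(n)\pi} = 2\,\mathrm{Re}\!\left(D\alpha_k(n)\pi\right)$. Thus everything reduces to computing $D\alpha_k(n)$ explicitly and extracting the rational part of $D\alpha_k(n)\pi$.

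The main computation is to rationalize the closed form of Corollary \ref{jan1}. Writing the two falling factorials as $(k-2\sqrt{-c})^{\underline{k}} = \prod_{i=1}^{k}(i-2\sqrt{-c})$ and $(\ell+2\sqrt{-c})^{\underline{\ell}} = \prod_{i=1}^{\ell}(i+2\sqrt{-c})$, I would clear each denominator via the conjugate identities $\frac{1}{2\sqrt{-c}} = -\frac{\sqrt{-c}}{2c}$ and $(i-2\sqrt{-c})(i+2\sqrt{-c}) = i^2+4c$, and then telescope $\prod_{i=1}^{k}(i+2\sqrt{-c})/\prod_{i=1}^{\ell}(i+2\sqrt{-c}) = \prod_{i=\ell+1}^{k}(i+2\sqrt{-c})$. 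Since $(n-\sqrt{-c})^{\underline{\ell}}$ and every remaining factor lie in $\mathbb{Z}[\sqrt{-c}]$, this collapses the formula to $\alpha_k(n) = -\frac{\sqrt{-c}\,S}{2c\prod_{i=1}^{k}(i^2+4c)}$, where $S := \sum_{\ell=0}^{k}(-1)^{k+\ell}\binom{k+\ell}{\ell}(n-\sqrt{-c})^{\underline{\ell}}\prod_{i=\ell+1}^{k}(i+2\sqrt{-c})$ is manifestly an element of $\mathbb{Z}[\sqrt{-c}]$. Hence $D\alpha_k(n) = -\frac{\sqrt{-c}\,S}{2}$, and writing $S = s_1 + s_2\sqrt{-c}$ with $s_1,s_2\in\mathbb{Z}$ and computing the rational part of $D\alpha_k(n)\pi$ gives $D = 2\,\mathrm{Re}\!\left(D\alpha_k(n)\pi\right) = c\,(s_2 A + s_1 B)$, the desired combination.

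I expect the delicate point to be the factor of $2$. Note that $D\alpha_k(n) = -\tfrac{\sqrt{-c}\,S}{2}$ need not lie in $\mathbb{Z}[\sqrt{-c}]$ (already for $k=0$, $c=1$ it equals $-\sqrt{-1}/2$), so one cannot argue by claiming $D\alpha_k(n)\in\mathbb{Z}[\sqrt{-c}]$. What saves the exact bound is that the half-integer coordinates of $D\alpha_k(n)$ get doubled when forming $2\,\mathrm{Re}\!\left(D\alpha_k(n)\pi\right)$: the cross terms recombine so that the rational part of $D\alpha_k(n)\pi$ is precisely $\tfrac{c(s_2 A + s_1 B)}{2}$, making $D = c(s_2 A + s_1 B)$ an honest integer — which is exactly why $D$, and not $2D$, works. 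The remaining ingredients (the telescoping bookkeeping and the membership $S\in\mathbb{Z}[\sqrt{-c}]$) are routine.
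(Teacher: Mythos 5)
Your proposal is correct and follows essentially the same route as the paper: both proofs clear denominators in the explicit formula of Corollary \ref{jan1} so that $2D\alpha_k$ has coefficients in $\mathbb{Z}[\sqrt{-c}]$, then add the Bézout identity $\alpha_k P_k+\overline{\alpha_k}\,\overline{P_k}=1$ to its conjugate to exhibit $D=c\prod_{\ell=1}^{n-m}(\ell^2+4c)$ as a $\mathbb{Z}$-linear combination of $A_k(n)$ and $B_k(n)$. The only (immaterial) difference is that the paper works with $2d\alpha_k$ at the polynomial level and lets the identity $zw+\bar z\bar w=2\,\mathrm{Re}(zw)$ absorb the factor $2$, whereas you evaluate at $n$ first and track the half-integer coordinates explicitly.
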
 
\begin{proof}
Let $c,n,m\in\mathbb{N^*}$, with $m\leq n$. Putting $k:=n-m\in\mathbb{N}$ and $d:=c\prod_{\ell=1}^{n-m}(\ell^2+4c)\in\mathbb{N^*}$, we have $\prod_{\ell=m}^{n}\left(\ell+\sqrt{-c}\right)=P_k(n)$; so, we have to show that $h_c\left(P_k(n)\right)$ divides $d$. By noting that $2d=\sqrt{-c}\cdot 2\sqrt{-c}\left(k-2\sqrt{-c}\right)^{\underline{k}}\left(k+2\sqrt{-c}\right)^{\underline{k}}$, we derive from Corollary \ref{jan1} that $2d\alpha_k\in\mathbb{Z}[\sqrt{-c}][X]$. So, there exist $r_k,s_k\in\mathbb{Z}[X]$ such that:
\[2d\alpha_k\left(X\right)=r_k\left(X\right)+s_k\left(X\right)\sqrt{-c}.\] 
Next, the identity of polynomials $\alpha_kP_k+\overline{\alpha_k}\overline{P_k}=1$ (given by Corollary \ref{c1}) implies that $2d\alpha_k\cdot P_k+\overline{2d\alpha_k}\cdot \overline{P_k}=2d$. By substituting in this last equality $P_k$ by $\left(A_k+B_k\sqrt{-c}\right)$ and $2d\alpha_k$ by $\left(r_k+s_k\sqrt{-c}\right)$, we obtain (in particular) that:
\[r_kA_k-cs_kB_k=d,\] 
implying that ${\gcd}_{\mathbb{Z}[X]}\left(A_k,B_k\right)$ divides $d$. We then conclude that ${\gcd}_{\mathbb{Z}}\left(A_k(n),B_k(n)\right)=h_c\left(P_k(n)\right)$ divides $d$, as required.
\end{proof}

\subsection{New estimates for the number $L_{c,m,n}$}

We have the following theorem:
\begin{thm}\label{t7}
Let $c,m,n\in\mathbb{N^*}$ such that $m\leq n$. Then:
\begin{enumerate}
\item The positive integer $L_{c,m,n}$ is a multiple of the rational number
\[\frac{\displaystyle\prod_{k=m}^{n}\left(k^2+c\right)}{c\cdot (n-m)!\displaystyle\prod_{k=1}^{n-m}\left(k^2+4c\right)}.\]
\item We have
\[L_{c,m,n}\geq \lambda_1(c) \cdot m^2\frac{n!^2}{m!^2(n-m)!^3},\]
where $\lambda_1(c):=e^{-\frac{2\pi^{2}}{3} c}/c$.
\end{enumerate}
\end{thm}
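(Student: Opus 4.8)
The plan is to treat the two parts in turn, the first being a bookkeeping consequence of results already proved and the second a short estimate built on top of it.

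\emph{Part (1).} I would simply chain Corollary~\ref{p2} with Theorem~\ref{jan2}. Write $N := \prod_{k=m}^{n}(k^2+c)$, $H := h_c\bigl(\prod_{k=m}^{n}(k+\sqrt{-c})\bigr)$ and $d := c\prod_{k=1}^{n-m}(k^2+4c)$. Corollary~\ref{p2} asserts that the positive integer $N/H$ divides $L_{c,m,n}(n-m)!$, while Theorem~\ref{jan2} asserts $H \mid d$. Since $\frac{N/H}{N/d} = d/H \in \mathbb{Z}$, the rational number $N/d$ divides $N/H$ in the sense of the paper, and hence divides $L_{c,m,n}(n-m)!$ as well. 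Unwinding this last divisibility, $\frac{L_{c,m,n}(n-m)!\,d}{N}$ is an integer, which is exactly the assertion that $L_{c,m,n}$ is a multiple of $\frac{N}{(n-m)!\,d} = \frac{\prod_{k=m}^{n}(k^2+c)}{c(n-m)!\prod_{k=1}^{n-m}(k^2+4c)}$. The only point needing care is the direction of divisibility when passing from $H \mid d$ to the two fractions, but this is settled by the identity $d/H \in \mathbb{Z}$ just used.

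\emph{Part (2).} The structural remark is that a positive integer which is a multiple of a positive rational $r$ is necessarily $\ge r$, its ratio to $r$ being a positive integer. Applying this to part (1) gives
\[L_{c,m,n} \;\ge\; \frac{\prod_{k=m}^{n}(k^2+c)}{c(n-m)!\prod_{k=1}^{n-m}(k^2+4c)},\]
and it remains to bound the right-hand side below by $\lambda_1(c)\,m^2 n!^2/(m!^2(n-m)!^3)$ by estimating numerator and denominator separately. For the numerator I discard the $+c$: since $\prod_{k=m}^{n}k = n!/(m-1)!$ and $(m-1)! = m!/m$,
\[\prod_{k=m}^{n}(k^2+c) \;\ge\; \Bigl(\frac{n!}{(m-1)!}\Bigr)^{2} \;=\; m^2\frac{n!^2}{m!^2}.\]
For the product in the denominator I factor out $(n-m)!^2$ and bound the cofactor uniformly in $n-m$, extending to an infinite product and using $\log(1+x)\le x$ together with $\sum_{k\ge 1}1/k^2 = \pi^2/6$:
\[\prod_{k=1}^{n-m}(k^2+4c) = (n-m)!^2\prod_{k=1}^{n-m}\Bigl(1+\frac{4c}{k^2}\Bigr) \le (n-m)!^2\exp\Bigl(4c\sum_{k=1}^{\infty}\frac{1}{k^2}\Bigr) = (n-m)!^2 e^{\frac{2\pi^2}{3}c}.\]
Substituting both estimates into the displayed lower bound and cancelling a factor $(n-m)!$ yields $L_{c,m,n} \ge \frac{e^{-2\pi^2 c/3}}{c}\cdot\frac{m^2 n!^2}{m!^2(n-m)!^3} = \lambda_1(c)\,\frac{m^2 n!^2}{m!^2(n-m)!^3}$, as required.

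As for the main difficulty: there is essentially none of substance, since all the real work (the Bézout identity and Proposition~\ref{pp} feeding Theorem~\ref{jan2}) is already behind us. The one ingredient worth isolating is the uniform upper bound $\prod_{k=1}^{n-m}(1+4c/k^2) \le e^{2\pi^2 c/3}$; it is the appearance of $\zeta(2)=\pi^2/6$ here that produces the exact constant $\lambda_1(c)$, and any cruder bound on this product would degrade it. Beyond that I would only take care that the factorial bookkeeping --- the factor $m^2 n!^2/m!^2$ from the numerator and the three powers of $(n-m)!$ --- is carried through without index errors.
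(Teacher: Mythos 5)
Your proposal is correct and follows essentially the same route as the paper: part (1) is exactly the chaining of Corollary~\ref{p2} with Theorem~\ref{jan2} (which the paper calls an immediate consequence, and which you rightly unwind via $d/H\in\mathbb{Z}$), and part (2) uses the very same estimates $\prod_{k=m}^{n}(k^2+c)\geq m^2 n!^2/m!^2$ and $\prod_{k=1}^{n-m}(1+4c/k^2)\leq e^{2\pi^2 c/3}$ via $\zeta(2)$. The only quibble is the phrase about ``cancelling a factor $(n-m)!$'' --- nothing cancels; the $(n-m)!\cdot(n-m)!^2$ simply combine into $(n-m)!^3$ --- but the final bound is right.
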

\begin{proof}
The first point of the theorem is an immediate consequence of Corollary \ref{p2} and Theorem \ref{jan2}. The second one follows from the first one and the easy inequalities:
\begin{align*}
\frac{\prod_{k=m}^{n}\left(k^2+c\right)}{c\cdot (n-m)!\prod_{k=1}^{n-m}\left(k^2+4c\right)}&\geq \frac{\prod_{k=m}^{n}k^2}{c\cdot (n-m)!^3\prod_{k=1}^{n-m}\left(1+\frac{4c}{k^2}\right)}\\&\geq \frac{m^2\left(\frac{n!}{m!}\right)^2}{c\cdot (n-m)!^3e^{\sum_{k=1}^{+\infty}\frac{4c}{k^2}}}\\&=\frac{e^{-\frac{2\pi^{2}}{3} c}}{c}\cdot m^2\frac{n!^2}{m!^2(n-m)!^3}.
\end{align*}
This completes the proof.
\end{proof}

We shall now impose conditions on $m$ (in terms of $n$) in order to optimize (resp. simplify) the estimate of the second point of Theorem \ref{t7}. To do so, we first need to get rid of the factorials in that estimate. We have the following:
\begin{coll}\label{t9}
Let $c,n,m\in\mathbb{N^*}$ such that $m<n$. Then, we have:
\begin{equation}\label{jan3}
L_{c,m,n}\geq \lambda_{2}(c)\cdot\frac{nm}{\left(n-m\right)^{3/2}}\left( \frac{m^2}{(n-m)^3}\right)^{n-m} e^{3(n-m)},
\end{equation}
where $\lambda_2(c):=\frac{e^{-\frac{2\pi^{2}}{3} c -\frac{5}{12}}}{\left(2\pi\right)^{3/2}c}$.
\end{coll}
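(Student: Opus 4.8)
The plan is to start from the factorial lower bound of Theorem~\ref{t7}(2), namely
\[L_{c,m,n}\geq \lambda_1(c)\, m^2\frac{n!^2}{m!^2(n-m)!^3}\qquad\text{with }\lambda_1(c)=e^{-\frac{2\pi^2}{3}c}/c,\]
and to eliminate the factorials by means of Stirling's formula in its effective two-sided form
\[\sqrt{2\pi N}\left(\frac{N}{e}\right)^N < N! < \sqrt{2\pi N}\left(\frac{N}{e}\right)^N e^{1/(12N)}\qquad(N\geq 1).\]
Writing $k:=n-m\geq 1$, I would bound the numerator $n!^2$ from below by the left inequality (discarding the positive correction, which only helps) and the denominators $m!^2$ and $(n-m)!^3=k!^3$ from above by the right inequality. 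After cancelling the factors $2\pi$ and collecting the powers of $e$ coming from the $(N/e)^N$ terms (these give $e^{-2n+2m+3k}=e^{k}$ since $n=m+k$), absorbing the leading $m^2$ into the $n/m$ prefactor, this shows that $m^2\dfrac{n!^2}{m!^2(n-m)!^3}$ is at least
\[\frac{nm}{(2\pi)^{3/2}\,k^{3/2}}\cdot\frac{n^{2n}}{m^{2m}k^{3k}}\cdot e^{k}\cdot e^{-1/(6m)-1/(4k)},\]
where the last factor gathers the three surviving Stirling corrections (the $e^{1/(12m)}$ squared and the $e^{1/(12k)}$ cubed, both pushed into the denominator).

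The heart of the argument is to convert the purely algebraic term $n^{2n}/m^{2m}$ into the shape $\bigl(m^2/k^3\bigr)^k$ demanded by the statement. Since $n=m+k$, taking logarithms reduces the required bound
\[\frac{n^{2n}}{m^{2m}}\geq m^{2k}e^{2k}\]
to the inequality $(m+k)\log\!\left(1+\tfrac{k}{m}\right)\geq k$, which is immediate from the elementary estimate $\log(1+x)\geq \frac{x}{1+x}$ applied with $x=k/m$. This is the one genuinely non-bookkeeping step, and I expect it to be the main (though still easy) obstacle: it is exactly this inequality that upgrades the $e^{k}$ produced by Stirling into the $e^{3k}$ of the final bound, while simultaneously replacing $n^{2n}/m^{2m}$ by the factor $m^{2k}$. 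Substituting $m^{2k}e^{2k}$ for $n^{2n}/m^{2m}$ and using $\frac{m^{2k}}{k^{3k}}=\bigl(\frac{m^2}{k^3}\bigr)^{k}$ turns the displayed expression into
\[\frac{1}{(2\pi)^{3/2}}\cdot\frac{nm}{k^{3/2}}\left(\frac{m^2}{k^3}\right)^{k}e^{3k}\cdot e^{-1/(6m)-1/(4k)}.\]

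Finally I would control the correction factor uniformly. Since $m\geq 1$ and $k=n-m\geq 1$, the exponent $-\frac1{6m}-\frac1{4k}$ is increasing in each of $m$ and $k$, hence minimized at $m=k=1$, giving $e^{-1/(6m)-1/(4k)}\geq e^{-5/12}$. Combining this with the constant $\lambda_1(c)$ carried from Theorem~\ref{t7} produces the overall constant
\[\lambda_1(c)\cdot\frac{e^{-5/12}}{(2\pi)^{3/2}}=\frac{e^{-\frac{2\pi^2}{3}c-\frac{5}{12}}}{(2\pi)^{3/2}c}=\lambda_2(c),\]
which is precisely the claimed bound \eqref{jan3}. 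All remaining manipulations are routine algebraic simplifications, so no further subtlety is anticipated.
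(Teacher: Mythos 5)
Your proposal is correct and follows essentially the same route as the paper: both start from Theorem \ref{t7}(2), apply the effective two-sided Stirling bounds to obtain the prefactor $(2\pi)^{-3/2}\frac{nm}{(n-m)^{3/2}}$ together with the correction $e^{-1/(6m)-1/(4(n-m))}\geq e^{-5/12}$, and then upgrade $(n/m)^{2n}$ to $e^{2(n-m)}$ via the elementary logarithm inequality (your $\log(1+x)\geq x/(1+x)$ is just the paper's $\log t\leq t-1$ in disguise). No gaps.
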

\begin{proof}
Starting from the lower bound established by Theorem \ref{t7} for $L_{c , m , n}$ and estimating each of its factorial terms by using the well-known double inequality:
\[k^{k}e^{-k}\sqrt{2\pi k}\leq k!\leq k^{k}e^{-k}\sqrt{2\pi k} e^{\frac{1}{12k}}~~~~(\forall k\in\mathbb{N^*})\]
(which can be found in Problem 1.15 of \cite{kon}), we get
$$
L_{c , m , n} \geq \lambda_1(c) (2 \pi)^{- 3/2} \cdot \frac{n m}{(n - m)^{3 / 2}} \cdot \left(\frac{n}{m}\right)^{2 n} \cdot \left(\frac{m^2}{(n - m)^3}\right)^{n - m} e^{n - m} \cdot e^{- \frac{1}{6 m} - \frac{1}{4 (n - m)}} .
$$
Next, since $e^{- \frac{1}{6 m} - \frac{1}{4 (n - m)}} \geq e^{- \frac{1}{6} - \frac{1}{4}} = e^{- \frac{5}{12}}$ and $\left(\frac{n}{m}\right)^{2 n} = e^{- 2 n \log(\frac{m}{n})} \geq e^{- 2 n (\frac{m}{n} - 1)} = e^{2 (n - m)}$, then we deduce that:
$$
L_{c , m , n} \geq \lambda_1(c) (2 \pi)^{- 3/2} e^{- 5 / 12} \cdot \frac{n m}{(n - m)^{3 / 2}} \left(\frac{m^2}{(n - m)^3}\right)^{n - m} e^{3 (n - m)} ,
$$
as required.
\end{proof}

In the context of Corollary \ref{t9}, by supposing that $n-m$ is of order of magnitude $n^\alpha$ for large $n$ (where $0<\alpha<1$), then the dominant part of the lower bound \eqref{jan3} for $L_{c , m , n}$ is $\left(\frac{m^2}{(n-m)^3}\right)^{n-m}$ and has order of magnitude $n^{(2-3\alpha)n^{\alpha}}$. So, to have an optimal estimate, we must take $\alpha$ less than but not too far from $\frac{2}{3}$ (a study of the function $\alpha\longmapsto (2-3\alpha)n^{\alpha}$ shows that the best value of $\alpha$ is $\alpha=\frac{2}{3}-\frac{1}{\log n}$). A concrete result specifying this heuristic reasoning is given by the following theorem: 
\begin{thm}\label{c5}
Let $c,m,n\in\mathbb{N^*}$ such that $m\leq n - \frac{1}{2}n^{2/3}$. Then, we have:
\[L_{c,m,n}\geq \lambda_3(c)\cdot\left(n-\frac{1}{2}n^{2/3}\right)\cdot\left(2e^{3}\right)^{\left\lfloor \frac{1}{2}n^{2/3}\right\rfloor},\]
where $\lambda_3(c):=\frac{e^{-\frac{2\pi^2}{3} c -\frac{5}{12}}}{\pi^{3/2} c}$.
\end{thm}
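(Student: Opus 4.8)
The plan is to deduce the bound directly from Corollary \ref{t9} after using the monotonicity of $L_{c,m,n}$ in $m$ to reduce to a single, hardest value of $m$. Since $L_{c,m,n}$ is non-increasing in $m$ and the right-hand side to be proved does not depend on $m$, it suffices to establish the inequality for the largest admissible value, namely $m_0 := \lfloor n - \frac{1}{2}n^{2/3}\rfloor$; equivalently, writing $t := \frac{1}{2}n^{2/3}$ and $k_0 := n - m_0 = \lceil t\rceil$, I would work with the smallest admissible value of $k = n - m$. Applying Corollary \ref{t9} at $(m_0,n)$ gives
\[L_{c,m_0,n}\ge \lambda_2(c)\,\frac{n m_0}{k_0^{3/2}}\left(\frac{m_0^2}{k_0^3}\right)^{k_0}e^{3k_0},\]
and I would record the relation $\lambda_3(c)=2^{3/2}\lambda_2(c)$ (immediate from the two definitions), so that the goal becomes $L_{c,m_0,n}\ge 2^{3/2}\lambda_2(c)\,(n-t)\,(2e^3)^{\lfloor t\rfloor}$.

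The heart of the argument is then two elementary inequalities. First, I would show $\frac{m_0^2}{k_0^3}\ge 2$, i.e. $(n-k_0)^2\ge 2k_0^3$; using $k_0\le t+1$ and $m_0\ge n-t-1$ this becomes a polynomial-type inequality in $n$ that holds for all $n$ beyond a small explicit threshold (one finds $n\ge 6$). This lets me replace $\left(\frac{m_0^2}{k_0^3}\right)^{k_0}$ by $2^{k_0}$, yielding
\[L_{c,m_0,n}\ge\lambda_2(c)\,\frac{n m_0}{k_0^{3/2}}\,(2e^3)^{k_0}.\]
Second, I would establish the prefactor inequality $\frac{n m_0}{k_0^{3/2}}\ge 2^{3/2}(n-t)$. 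The decisive point is that the constants $\frac{1}{2}$ and $\frac{2}{3}$ are tuned so that this is an \emph{equality} in the critical case: when $t=\frac{1}{2}n^{2/3}\in\N$ one has $k_0=t$ and $t^{3/2}=2^{-3/2}n$, whence $\frac{n}{k_0^{3/2}}=2^{3/2}$ exactly.

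It remains to reconcile the exponent $k_0=\lceil t\rceil$ produced by the extremal $m$ with the floor $(2e^3)^{\lfloor t\rfloor}$ in the statement, which I would handle by a short case split on whether $t\in\N$. If $t\in\N$ then $k_0=\lfloor t\rfloor$ and the two inequalities above combine to give exactly the target. If $t\notin\N$ then $k_0=\lfloor t\rfloor+1$, so $(2e^3)^{k_0}=(2e^3)^{\lfloor t\rfloor}\cdot 2e^3$; the spare factor $2e^3\approx 40$ comfortably outweighs the small relative deficit by which $\frac{n m_0}{k_0^{3/2}}$ now falls below $2^{3/2}(n-t)$ (a deficit tending to $0$ as $n$ grows), and the bound follows again. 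The finitely many remaining small values of $n$, for which $\frac{m_0^2}{k_0^3}\ge 2$ may fail, are disposed of trivially, since there the tiny factors $\lambda_3(c)$ and $(2e^3)^{\lfloor t\rfloor}$ force the right-hand side below $1\le L_{c,m,n}$. I expect the main obstacle to be exactly this bookkeeping in the non-integer case: because the critical integer case is a genuine equality, every source of slack must be tracked carefully, and the passage $\lceil t\rceil\to\lfloor t\rfloor$ together with the prefactor deficit can only be absorbed by invoking the full ceiling boost $2e^3$ rather than a fractional power of it.
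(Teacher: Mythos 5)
Your overall strategy coincides with the paper's: reduce to a single extremal value of $m$ by monotonicity, apply Corollary \ref{t9}, and check that the base of the exponential is at least $2$ while the prefactor is at least $2^{3/2}(n-t)$, where $t:=\frac{1}{2}n^{2/3}$ and $\lambda_3(c)=2^{3/2}\lambda_2(c)$. The difference is \emph{which} extremal $m$ you pick, and this is where your write-up breaks. The paper takes $m_n:=n-\lfloor t\rfloor$, which still dominates every admissible $m$ (since $m\le n-t$ and $m\in\Z$ give $m\le\lfloor n-t\rfloor\le n-\lfloor t\rfloor$), so that $n-m_n=\lfloor t\rfloor$ matches the exponent in the statement exactly; the two key inequalities then reduce to the real-variable facts $\frac{n(n-t)}{t^{3/2}}=2^{3/2}(n-t)$ and $\frac{(n-t)^2}{t^3}=8\left(1-\frac{1}{2n^{1/3}}\right)^2\ge 2$, valid for all $n\ge1$, and only $n<3$ (where $\lfloor t\rfloor=0$ prevents invoking Corollary \ref{t9}) needs separate trivial treatment. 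Your choice $m_0=\lfloor n-t\rfloor$ forces $k_0=\lceil t\rceil$, hence the floor/ceiling case split and an integer-dependent inequality $m_0^2/k_0^3\ge2$ that you did not actually verify.

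Two of your concrete claims are false. First, $m_0^2/k_0^3\ge2$ does not hold for all $n\ge6$: it fails at $n=9$ (where it equals $36/27$), $n=10$ ($49/27$) and $n=15$ ($121/64$), because rounding $t$ up to $\lceil t\rceil$ in the denominator can eat all of the slack in $8\left(1-\frac{1}{2n^{1/3}}\right)^2\ge2$. Second, your disposal of the exceptional $n$ by ``the right-hand side is below $1$'' is wrong: at $n=15$, $c=1$ the right-hand side is $\lambda_3(1)\cdot(15-t)\cdot\left(2e^3\right)^3\approx 127$, and at $n=9,10$ it is about $1.8$ and $2.0$. These cases remain easy (for instance $L_{c,m,n}\ge n^2+c$ suffices), but not for the reason you give, and since your critical case is an exact equality you cannot afford untracked losses of this size. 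The cleanest repair is simply to adopt the reduction point $m_n=n-\lfloor t\rfloor$, after which none of this bookkeeping arises.
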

\begin{proof}
A simple calculation shows that the result of the theorem is true for $n < 3$. Suppose for the sequel that $n \geq 3$ and let $m_n:=n-\left\lfloor \frac{1}{2}n^{2/3}\right\rfloor < n$; so $m \leq m_n$. From Corollary \ref{t9}, we have: 
\begin{align*}
L_{c,m_n,n}&\geq \lambda_2(c)\frac{n\left(n-\left\lfloor \frac{1}{2}n^{2/3}\right\rfloor\right)}{\left\lfloor \frac{1}{2}n^{2/3}\right\rfloor^{3/2}}\left( \frac{\left(n-\left\lfloor \frac{1}{2}n^{2/3}\right\rfloor\right)^2}{\left\lfloor \frac{1}{2}n^{2/3}\right\rfloor^3}\right)^{\left\lfloor \frac{1}{2}n^{2/3}\right\rfloor} e^{3\left\lfloor \frac{1}{2}n^{2/3}\right\rfloor}\\&\geq \lambda_2(c)\frac{n\left(n-\frac{1}{2}n^{2/3}\right)}{\left(\frac{1}{2}n^{2/3}\right)^{3/2}}\left(\frac{\left(n-\frac{1}{2}n^{2/3}\right)^2}{\left(\frac{1}{2}n^{2/3}\right)^3}\right)^{\left\lfloor \frac{1}{2}n^{2/3}\right\rfloor}e^{3\left\lfloor \frac{1}{2}n^{2/3}\right\rfloor}\\&=2^{3/2}\lambda_2(c)\left(n-\frac{1}{2}n^{2/3}\right)\left[8\left(1-\frac{1}{2n^{1/3}}\right)^2\right]^{\left\lfloor \frac{1}{2}n^{2/3}\right\rfloor}e^{3\left\lfloor \frac{1}{2}n^{2/3}\right\rfloor}.
\end{align*}
But since $1-\frac{1}{2n^{1/3}}\geq \frac{1}{2}$ (because $n\geq 1$), we deduce that:
\[L_{c,m_n,n}\geq 2^{3/2}\lambda_2(c)\left(n-\frac{1}{2}n^{2/3}\right)\left(2e^3\right)^{\left\lfloor \frac{1}{2}n^{2/3}\right\rfloor}.\] 
The required result follows from the trivial fact that $L_{c,m,n}\geq L_{c,m_n,n}$ (since $m\leq m_n$).
\end{proof}

In another direction, we derive from Corollary \ref{t9} the following theorem, which completes (in a way) Theorem \ref{c5} above.
\begin{thm}
Let $c,m,n\in\mathbb{N^*}$ such that $n-\frac{1}{2}n^{2/3} \leq m \leq n$. Then, we have:
\[L_{c,m,n}\geq \lambda_2(c)\cdot ne^{3(n-m)},\]
where $\lambda_2(c)$ is defined in Corollary \ref{t9}. 
\end{thm}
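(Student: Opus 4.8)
The plan is to derive this theorem directly from Corollary~\ref{t9}, after first peeling off the degenerate case $m=n$ (for which Corollary~\ref{t9}, requiring $m<n$, does not apply).

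First I would dispose of the case $m=n$. Here $L_{c,n,n}=n^2+c$, while the claimed bound reads $L_{c,n,n}\geq\lambda_2(c)\cdot n$. Since $\frac{2\pi^2}{3}>0$ and $(2\pi)^{3/2}>1$, one checks at once that $\lambda_2(c)<1$ for every $c\in\mathbb{N^*}$, so that $n^2+c\geq n^2\geq n>\lambda_2(c)\cdot n$, and the inequality holds trivially.

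Next, assuming $m<n$, I would invoke Corollary~\ref{t9}, which gives
\[
L_{c,m,n}\geq\lambda_2(c)\cdot\frac{nm}{(n-m)^{3/2}}\left(\frac{m^2}{(n-m)^3}\right)^{n-m}e^{3(n-m)}.
\]
Comparing this with the target bound $\lambda_2(c)\cdot n\,e^{3(n-m)}$, it suffices to establish the purely elementary inequality
\[
\frac{m}{(n-m)^{3/2}}\left(\frac{m^2}{(n-m)^3}\right)^{n-m}\geq 1 .
\]
To prove it, I would set $k:=n-m\in\mathbb{N^*}$, so that the hypothesis $n-\frac12 n^{2/3}\leq m<n$ becomes $1\leq k\leq\frac12 n^{2/3}$ together with $m\geq n\bigl(1-\frac12 n^{-1/3}\bigr)$. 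From $k\leq\frac12 n^{2/3}$ I get $k^3\leq\frac18 n^2$ and $k^{3/2}\leq\frac{1}{2\sqrt2}\,n$, whence
\[
\frac{m^2}{(n-m)^3}\geq 8\left(1-\tfrac12 n^{-1/3}\right)^2
\qquad\text{and}\qquad
\frac{m}{(n-m)^{3/2}}\geq 2\sqrt2\left(1-\tfrac12 n^{-1/3}\right).
\]
Since $n\geq 1$ forces $1-\frac12 n^{-1/3}\geq\frac12$, these crude bounds reduce to $\frac{m^2}{(n-m)^3}\geq 2$ and $\frac{m}{(n-m)^{3/2}}\geq\sqrt2$, and therefore
\[
\frac{m}{(n-m)^{3/2}}\left(\frac{m^2}{(n-m)^3}\right)^{n-m}\geq\sqrt2\cdot 2^{k}\geq 2\sqrt2>1,
\]
which is exactly the inequality needed to finish.

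I do not expect any genuine obstacle here: the whole point is that the hypothesis $m\geq n-\frac12 n^{2/3}$ is precisely strong enough to force the base $\frac{m^2}{(n-m)^3}$ of the exponential factor in Corollary~\ref{t9} to stay above a constant strictly larger than $1$, so that this factor can only help. The only steps demanding a little care are the separate treatment of $m=n$ (outside the range of Corollary~\ref{t9}) and the check that the coarse estimate $1-\frac12 n^{-1/3}\geq\frac12$, and hence the resulting constants $2$ and $\sqrt2$, are valid uniformly over all admissible $n$.
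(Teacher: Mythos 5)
Your proof is correct and follows essentially the same route as the paper: both dispose of the case $m=n$ trivially, invoke Corollary~\ref{t9} for $m<n$, and verify that the hypothesis $m\geq n-\frac{1}{2}n^{2/3}$ forces the remaining factor $\frac{m}{(n-m)^{3/2}}\bigl(\frac{m^2}{(n-m)^3}\bigr)^{n-m}$ to be at least $1$. The only difference is cosmetic: the paper establishes $m^2>(n-m)^3$ by a monotonicity argument on $f(x)=x^2-(n-x)^3$, while you bound the two ratios directly from $n-m\leq\frac{1}{2}n^{2/3}$, which in fact yields the slightly stronger constants $2$ and $\sqrt{2}$.
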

\begin{proof}
The result of the theorem is trivial for $m = n$. Suppose for the sequel that $m < n$; so we have $n \geq 2$. Now, let $f:\left[0,n\right]\longrightarrow \mathbb{R}$ be the function defined by $f(x)=x^2-(n-x)^3$ $(\forall x\in [0,n])$. Obviously, $f$ is increasing. Next, we have:
\begin{align*}
f\left(n-\frac{1}{2}n^{2/3}\right)&=\left(n-\frac{1}{2}n^{2/3}\right)^2-\left(\frac{1}{2}n^{2/3}\right)^3\\&=n^2-n^{5/3}+\frac{1}{4}n^{4/3}-\frac{1}{8}n^2\\&=\frac{7}{8}n^2-n^{5/3}+\frac{1}{4}n^{4/3}.
\end{align*}
But since $n^2\geq \frac{8}{7}n^{5/3}$ (because $n\geq 2$), it follows that $f\left(n-\frac{1}{2}n^{2/3}\right)\geq \frac{1}{4}n^{4/3}>0$. So, the increase of $f$ insures that $f(m)>0$ (since $m\geq n-\frac{1}{2}n^{2/3}$ by hypothesis). Thus $\frac{m^2}{(n-m)^3}>1$ and $\frac{m}{(n-m)^{3/2}}>1$. By reporting these into \eqref{jan3}, we then conclude that:
\[L_{c,m,n}\geq \lambda_2(c)\cdot ne^{3(n-m)},\]
as required. This completes the proof.  
\end{proof}

\end{document}